\newcommand{\Cc}{\mathbb{C}} 
\newcommand{\Pp}{\mathbb{P}}
\newcommand{\Zz}{\mathbb{Z}}
\newcommand{\Qq}{\mathbb{Q}} 
\newcommand\oline[1] {{\overline{#1}}}
\newcommand\ra{{\rightarrow}}
\DeclareMathOperator\Gal{Gal}
\theoremstyle{plain}
\newtheorem{theorem}{Theorem}[section]    
\newtheorem{question}[theorem]{Question}
\newtheorem{lemma}[theorem]{Lemma}       
\newtheorem{proposition}[theorem]{Proposition}  
\newtheorem{corollary}[theorem]{Corollary}}
\theoremstyle{remark}
\newtheorem{definition}[theorem]{Definition}      
\newtheorem{remark}[theorem]{Remark}
\newtheorem{example}[theorem]{Example}}
\begin{document}

\title{On parametric and generic polynomials \hskip 50mm with one parameter}

\author{Pierre D\`ebes}
\email{Pierre.Debes@math.univ-lille1.fr}

\author{Joachim K\"onig}
\email{jkoenig@knue.ac.kr}

\author{Fran\c cois Legrand}
\email{francois.legrand@tu-dresden.de}

\author{Danny Neftin}
\email{dneftin@technion.ac.il}

\address{Laboratoire de Math\'ematiques Paul Painlev\'e, Universit\'e de Lille, 59655 Villeneuve d'Ascq Cedex, France}

\address{Department of Mathematics Education, Korea National University of Education, 28173 Cheongju, South Korea}

\address{Institut f\"ur Algebra, Fachrichtung Mathematik, TU Dresden, 01062 Dresden, Germany}

\address{Department of Mathematics, Technion, Israel Institute of Technology, Haifa 32000, Israel}

\maketitle

\begin{abstract}
Given fields $k \subseteq L$, our results concern one para\-me\-ter {\it{$L$-parametric}} polynomials over $k$, and their relation to generic polynomials. The former are polynomials $P(T,Y) \in k[T][Y]$ of group $G$ which parametrize all Galois extensions of $L$ of group $G$ via specialization of $T$ in $L$, and the latter are those which are $L$-parametric for eve\-ry field $L \supseteq k$. We show, for example, that being $L$-parametric with $L$ taken to be the single field $\Cc((V))(U)$ is in fact sufficient for a polynomial $P(T, Y) \in \Cc[T][Y]$ to be generic. As a corollary, we obtain a complete list of one parameter generic polyno\-mials over a given field of characteristic 0, complemen\-ting the classical literature on the topic. Our approach also applies to an old problem of Schinzel: subject to the Birch and Swinnerton-Dyer conjecture, we provide one parameter families of affine curves over number fields, all with a rational point, but with no rational generic point.
\end{abstract}

\section{Introduction} \label{sec:intro}

Understanding the set ${\sf{R}}_G(k)$ of all finite Galois extensions of a given number field $k$ with given Galois group $G$ is a central objective in algebraic number theory. A first natural question in this area, which goes back to Hilbert and Noether, is the so-called {\it{inverse Galois problem}}: is ${\sf{R}}_G(k) \not= \emptyset$ for every number field $k$ and every finite group $G$? A more applicable goal is an explicit description of the sets ${\sf{R}}_G(k)$, such as a parametrization. A classical landmark in this context is the following definition (see the book \cite{JLY02}):

\begin{definition} \label{def:intro}
Let $k$ be a (number) field, $G$ a finite group, $n \geq 1$, and $P(T_1, \dots, T_n,Y) \in k[T_1, \dots, T_n][Y]$ a monic separable polynomial of group $G$ over $k(T_1, \dots, T_n)$.

\vspace{0.5mm}

\noindent
{\rm{(1)}} Given an overfield $L \supseteq k$, say that $P(T_1, \dots, T_n, Y)$ is {\it{$L$-parametric}} if, for every extension $E/L \in {\sf{R}}_G(L)$, there exists $(t_1, \dots, t_n) \in L^n$ such that $E$ is the splitting field over $L$ of $P(t_1, \dots, t_n,Y)$.

\vspace{0.5mm}

\noindent
{\rm{(2)}} Say that $P(T_1, \dots, T_n,Y)$ is {\it{generic}} if it is $L$-parametric for every overfield $L \supseteq k$.
\end{definition}

There are number fields $k$ and finite groups $G$ with no generic polynomial with coefficients in $k$, e.g., for $k=\Qq$ and $G=\Zz/8\Zz$  (see \cite[\S2.6]{JLY02}). However, if $k$ is a given number field and $G$ is a given finite group, it is in general unknown whether there is a generic polynomial of group $G$ with coefficients in $k$; existence of such polynomial would imply ${\sf{R}}_G(k) \not= \emptyset$, which is already open in general. Even for groups like $G=A_d$, for which ${\sf{R}}_G(k) \not= \emptyset$ is known, the question is open (for $d \geq 6$ and $k=\Qq$; see \cite[\S8.5]{JLY02}).

The case $n=1$ is better understood. If $k$ is any field of characteristic zero, finite groups $G$ with a generic polynomial $P(T,Y) \in k[T][Y]$ are precisely known (see \S\ref{ssec:basic_4}). In the case $k=\Qq$, those groups are exactly the subgroups of $S_3$. If $k$ is arbitrary, only cyclic groups and dihedral groups of order $2m$ with $m \geq 3$ odd can have a generic polynomial $P(T,Y) \in k[T][Y]$.

Here are the main stages of the classification.

\noindent
- If $G \not \subset {\rm{PGL}}_2(k)$ and if $P(T,Y) \in k[T][Y]$ is of group $G$, then $P(T,Y)$ is not generic; indeed, the Noether invariant extension $k({\bf{T}})/k({\bf{T}})^G$, with ${\bf T}= (T_1,\ldots,T_{|G|})$, cannot be reached by specializing $P(T,Y)$ at some $T=t_0 \in k({\bf{T}})^G$ (see \cite[Proposition 8.1.4]{JLY02}).  

\noindent
- If $G$ ($\subset {\rm{PGL}}_2(k)$ and) has a non-cyclic abelian subgroup, then the {\it{essential dimension}} theory of Buhler--Reichstein (see \cite{BR97, JLY02}) shows that there is no generic polynomial $P(T,Y) \in k[T][Y]$ of group $G$.

\noindent
- The remaining finite subgroups of ${\rm{PGL}}_2(k)$ of essential dimension 1 over $k$ are shown to have a generic polynomial $P(T,Y) \in k[T][Y]$.

We present a new approach, which allows more precise non-parametricity conclusions, over some specific fields, and leads to improvements on the above results. Theorem \ref{thm:intro_2} is a new general result on one parameter non-generic polynomials. Corollary \ref{cor:list} shows the concrete gain for the classification discussed above.

\begin{theorem} \label{thm:intro_2}
Let $k$ be a field of characteristic zero, $P(T,Y) \in k[T][Y]$ a monic separable polynomial, and $U, V$ two indeterminates. Suppose $P(T,Y)$ is not generic. Then either $P(T,Y)$ is not $\overline{k}((V))(U)$-parametric or $P(T,Y)$ is not $k(U)$-parametric. 
\end{theorem}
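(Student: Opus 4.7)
I would prove the contrapositive: if $P(T,Y)$ is both $\overline{k}((V))(U)$-parametric and $k(U)$-parametric, then $P(T,Y)$ is generic. Let $G$ be the Galois group of $P(T,Y)$ over $k(T)$, and let $\phi:\widetilde X \to \mathbb{P}^1_k$ be the associated Galois cover. Parametricity over $L$ says that every $G$-Galois extension $E/L$ is realized by $L$-specialization of $T$, equivalently that every twisted form $\widetilde X^E \to \mathbb{P}^1_L$ has an $L$-rational point outside the branch locus. The task is to show that the single test field $\overline{k}((V))(U)$, together with $k(U)$, is sensitive enough to detect every failure of genericity.

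I would run through the three obstructions to genericity recalled in the introduction and show that each of them is witnessed by one of these two fields. \textbf{(a)} If $G$ is not conjugate into $\mathrm{PGL}_2(\overline{k})$, the Noether invariant extension $k(\mathbf{T})/k(\mathbf{T})^G$ is not reached by specialization. I would descend this obstruction to $k(U)$: starting from the Noether extension, specialize all but one of the $T_i$'s carefully (in the spirit of Hilbert-irreducibility-for-non-parametricity arguments) to produce a $G$-Galois extension of $k(U)$ which cannot be the splitting field of any $P(t,Y)$ with $t \in k(U)$. \textbf{(b)} If $G \subset \mathrm{PGL}_2(\overline{k})$ has a non-cyclic abelian subgroup $A$, the essential dimension lower bound of Buhler--Reichstein forbids a one-parameter versal family. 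Here I would pass to $\overline{k}((V))(U)$, using Harbater-style formal patching over $\overline{k}[[V]]$ to build an explicit $G$-Galois cover of $\mathbb{P}^1_{\overline{k}((V))}$ with prescribed inertia containing $A$; the resulting Galois extension of $\overline{k}((V))(U)$ cannot arise from a specialization of $P$, because such a specialization would amount to a one-parameter versal description of $A$-extensions over a local field. \textbf{(c)} If $G$ is in the admissible list (cyclic, or dihedral of order $2m$ with $m$ odd, up to the remaining groups in characteristic zero) but $P$ is not itself generic, then $P$ differs from a known generic polynomial $P_0$ by a nontrivial $\mathrm{PGL}_2$-cocycle; this cocycle already obstructs parametricity over $k(U)$, since the comparison with $P_0$ reduces to rationality of a twisted conic over $k(U)$.

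The main obstacle will be step \textbf{(a)}: turning the multi-variable Noether obstruction into a genuine non-parametricity statement over the single field $k(U)$. One has to specialize the extra parameters in a way that preserves the non-reachability by any $t \in k(U)$, and not merely by $t \in k$, so the choice of specialization must be transverse to the family of twisted covers $\widetilde X^E$ simultaneously. This likely requires a dimension count on the Hurwitz space of twists coupled with a Hilbert-irreducibility argument valid over $k(U)$, rather than over $k$. Once this is established, parts \textbf{(b)} and \textbf{(c)} should follow by appealing to well-developed local/patching machinery and the explicit classification of one-parameter generic polynomials, respectively.
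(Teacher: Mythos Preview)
Your trichotomy is borrowed from the classification of \emph{groups} admitting a one-parameter generic polynomial, but Theorem~\ref{thm:intro_2} is a statement about a \emph{specific} polynomial $P$. This mismatch creates a genuine gap. Take $G=\Zz/2\Zz$ and $P(T,Y)=Y^2-Q(T)$ with $Q$ separable of degree $3$ or $4$: the group lands in your case~(c), yet $P$ is not related to the generic polynomial $Y^2-T$ by any $\mathrm{PGL}_2$-cocycle --- the two covers have different branch-point numbers and different genera, so no M\"obius transformation or twist connects them, and your twisted-conic argument has nothing to grab onto. More generally, for cyclic or odd-dihedral $G$ a non-generic $P$ can fail because its cover has genus $\geq 1$, or has the right genus but branch points outside $\Pp^1(k)$; neither phenomenon is a $\mathrm{PGL}_2$-cocycle. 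Your case split (a)/(b)/(c) simply does not exhaust the ways a specific $P$ can fail to be generic.

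The paper's decomposition is by invariants of the cover $F/k(T)$ rather than of $G$. If $F$ has genus $\geq 1$, then $P$ is already not $\overline{k}((V))(U)$-parametric, using only that $\overline{k}((V))$ is ample together with the geometric specialization results of \cite{DKLN18} (Theorem~\ref{list}(1)); this single step absorbs your case~(a) (since $G\not\subset\mathrm{PGL}_2(\Cc)$ forces genus $\geq 1$) \emph{and} the missing genus-$\geq 1$ instances of (c), and it works over $\overline{k}((V))(U)$, so the Noether-descent-to-$k(U)$ obstacle you flag never arises. For genus $0$ with a non-cyclic abelian subgroup the paper does patch over $\overline{k}[[V]]$ as you suggest, but the reason no specialization matches is concrete rather than an essential-dimension count: every specialization of $FL_2/L_2(T)$ has \emph{cyclic} completion at almost every prime $\langle U-u\rangle$ of $\overline{k}((V))[U]$ (Lemma~\ref{cyclic}), while the patched extension is built with a non-cyclic abelian completion at one such prime (Lemma~\ref{lem:patching}). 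The field $k(U)$ enters only at the end, to detect non-$k$-rational branch points in the residual genus-$0$ cyclic and odd-dihedral cases (Theorem~\ref{list}(3)); the covers that survive all three filters are shown to be generic directly (Proposition~\ref{list2}).
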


Thus Theorem \ref{thm:intro_2} gives explicit base changes $L/k$ such that any non-generic polynomial $P(T,Y) \in k[T][Y]$ is not $L$-parametric. Compared to the above approach, our base chan\-ges are purely transcendental, of transcendence degree 1, and do not depend on the Galois group $G$ of $P(T,Y)$ over $k(T)$, unlike the base change $k({\bf{T}})^G/k$ of the first stage above.

We refer to Corollary \ref{thm:intro_2bis} for a more general version of Theorem \ref{thm:intro_2}, which also presents some variants. For example, we prove that, if $G$ is neither cyclic nor dihedral of order $2n$ with $n \geq 3$ odd (in particular, $P(T,Y)$ is not generic), then $P(T,Y)$ is not $\overline{k}((V))(U)$-parametric.

The proof of Theorem \ref{thm:intro_2} uses a variety of tools, including the arithmetic specialization methods of \cite{KLN19}, patching methods from \cite{HHK11}, and the geometric specialization methods of \cite{DKLN18}.

Using Theorem \ref{thm:intro_2}, we obtain an explicit list of all the one parameter generic po\-ly\-no\-mials over any field $k$ of characteristic 0 (and not only the groups with such a polynomial). For simplicity, we give the list for $k=\Qq$ (see Corollary \ref{list ext} for the general case). 

\begin{corollary}\label{cor:list} 
Let $G$ be a non-trivial finite group and $P(T,Y) \in \Qq[T][Y]$ a monic separable polynomial of group $G$ and splitting field $F$ over $\Qq(T)$. Then $P(T,Y)$ is generic if and only if one of these conditions holds (up to a M\"obius transformation on $T$):

\vspace{1mm}

\noindent
{\rm{(1)}} $G=\Zz/2\Zz$ and $F$ is the splitting field over $\Qq(T)$ of $Y^2-T$,

\vspace{0.5mm}

\noindent
{\rm{(2)}} $G=\Zz/3\Zz$ and $F$ is the splitting field over $\Qq(T)$ of $Y^3- TY^2 + (T-3)Y + 1$,

\vspace{0.5mm}

\noindent
{\rm{(3)}} $G=S_3$ and $F$ is the splitting field over $\Qq(T)$ of $Y^3+TY+T$.
\end{corollary}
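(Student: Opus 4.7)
The plan is to combine the group classification recalled in the introduction with Theorem~\ref{thm:intro_2} in order to pin down not merely the group $G$ but the splitting field $F$ itself. First, since $P(T,Y)$ is assumed generic, its Galois group over $\Qq(T)$ is a subgroup of $S_3$ by the classical results cited above; excluding the trivial case, $G \in \{\Zz/2\Zz,\,\Zz/3\Zz,\,S_3\}$.

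Next, I would classify, up to M\"obius transformations on $T$, the Galois extensions $F/\Qq(T)$ of group $G$ which can serve as splitting field of a generic polynomial. Genericity combined with the fact that $G$ has essential dimension $1$ over $\Qq$ forces $F$ to be $\Qq$-rational, say $F = \Qq(s)$, so the situation is that of a Galois cover $\pi\colon \Pp^1_s \to \Pp^1_T$ defined over $\Qq$ with group $G$. Since a L\"uroth generator $s$ is itself only determined up to M\"obius, the classification reduces to listing the $\mathrm{PGL}_2(\Qq)$-conjugacy classes of subgroups of $\mathrm{PGL}_2(\Qq)$ isomorphic to $G$; for each of the three groups this yields a short, explicit list, and the polynomials (1), (2), (3) correspond to one distinguished class in it.

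The listed polynomials are classically known to be generic (see \cite{JLY02}), so the task reduces to ruling out the remaining classes, and this is where Theorem~\ref{thm:intro_2} enters. For each alternative cover I would exhibit an obstructing Galois extension of either $\oline{\Qq}((V))(U)$ or $\Qq(U)$ not realizable by specialization of $T$, contradicting genericity. For instance, the twisted $\Zz/2\Zz$-cover $Y^2 - TY + c$ with $c \in \Qq^\times$ a non-square should fail to be $\Qq(U)$-parametric: a specialization $T = t(U) \in \Qq(U)$ yields the quadratic discriminant $t(U)^2 - 4c$, and a valuation-theoretic computation at $U = 0$ (using that $c$ is a non-square, so $t(0)^2 \neq 4c$ whenever $t$ is finite there) shows that $t(U)^2 - 4c$ always has even valuation, whereas $U$ has odd valuation; hence $\Qq(U)(\sqrt{U})$ lies outside the specialization image. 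Analogous ramification-based obstructions, at suitable places of $\oline{\Qq}((V))(U)$, should take care of the alternative $\Zz/3\Zz$ and $S_3$ covers. The main obstacle is the uniform execution of this last step: for every alternative cover one must produce a concrete obstructing extension, which requires a careful inspection of the inertia data of $\pi$ along its branch locus in order to decide which of the two fields provides the obstruction and how.
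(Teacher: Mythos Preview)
Your approach diverges from the paper's and has a real gap. The paper does not invoke Theorem~\ref{thm:intro_2} directly, nor does it enumerate $\mathrm{PGL}_2(\Qq)$-conjugacy classes and then rule out alternatives by ad hoc obstructions. Instead, it first establishes a branch-point characterization of genericity (Corollary~\ref{list ext}, built from Theorem~\ref{list} and Proposition~\ref{list2}): over $\Qq$, a $\Qq$-regular extension $F/\Qq(T)$ of group $G$ is generic if and only if $(G,r,{\bf t})$ is one of $(\Zz/2\Zz,\,2,\,{\bf t}\subset\Pp^1(\Qq))$, $(\Zz/3\Zz,\,2,\,\text{no condition})$, or $(S_3,\,3,\,{\bf t}\subset\Pp^1(\Qq))$. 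Uniqueness up to M\"obius then follows without any further non-parametricity arguments: for $\Zz/2\Zz$ one writes down all two-branch-point quadratic extensions with $\Qq$-rational branch locus; for $\Zz/3\Zz$ one normalizes the branch points via the Branch Cycle Lemma and shows that two distinct candidates would compose to a $(\Zz/3\Zz)^2$-extension with only two branch points, which is impossible; for $S_3$ one uses rigidity of the triple $(C_2,C_2,C_3)$.

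The gap in your outline is twofold. First, ``essential dimension $1$'' does not by itself force $F$ to be $\Qq$-rational; you need genus $0$ (which comes from Theorem~\ref{list}(1), not from essential dimension) together with the existence of a $\Qq$-rational point on $F$ (which requires something like strong $\Qq$-parametricity to produce a trivial specialization). Second, and more seriously, your scheme of producing an explicit obstructing extension for each non-standard $\mathrm{PGL}_2(\Qq)$-class is only sketched for the easiest case and not even outlined for $\Zz/3\Zz$ or $S_3$; you yourself flag this as the main obstacle. The paper's route avoids this entirely: once the branch-point constraints are in hand, no further obstruction needs to be manufactured, and the uniqueness arguments are short and uniform.
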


Note that the list is essentially known to experts, and the given polynomials are also known to be generic. The list complements the literature by showing that these indeed are the only one parameter generic polynomials (over $\Qq$). 

Our second type of results gives non-parametricity conclusions over $k$ itself (i.e., no base change $L/k$ is allowed). Such conclusions depend more on the arithmetic of $k$. For example, if $k$ is PAC, i.e., if every non-empty geometrically irreducible $k$-variety has a Zariski-dense set of $k$-rational points (see \cite{FJ08}), {\it{every}} polynomial $P(T,Y) \in k[T][Y]$ (whose splitting field $F$ over $k(T)$ fulfills $F \cap \overline{k}=k$) is $k$-parametric (see \cite{Deb99a})\footnote{There are PAC fields $k$ with ${\sf{R}}_G(k) \not= \emptyset$ for every finite group $G$, and so for which the $k$-parametricity property is not trivial as it is for algebraically closed fields. A concrete example (due to Pop) is $\mathbb{Q}^{\rm{tr}}(\sqrt{-1})$.}. 

However, if $k$ is a number field, it is expected that, as in the generic situation, most finite groups fail to have a $k$-parametric polynomial $P(T,Y) \in k[T][Y]$. Yet, no such group was known before \cite{KL18, KLN19}, which provide many examples. But the question remains of the classification of all the finite groups $G$ with a $k$-parametric polynomial $P(T,Y) \in k[T][Y]$, and of the associated polynomials. Could it be the same as in the generic situation? For $n \geq 2$, this is not the case: $\Zz/8\Zz$ has a $\Qq$-parametric polynomial $P(T_1, \dots, T_n, Y) \in \Qq[T_1, \dots, T_n][Y]$ for some $n \geq 2$ (see \cite{Sch92}) but no generic polynomial over $\Qq$. For $n=1$, the question is subtler. We show that, for polynomials $P(T,Y) \in k[T][Y]$, ``$k$-parametric" is still strictly weaker than ``generic", and even weaker than ``$k(U)$-parametric", but only under the Birch and Swinnerton-Dyer conjecture.

This comparison between the various notions relates to the following old problem of Schinzel (see \cite[Chapter 5, \S5.1]{Sch00}):

\begin{question} \label{pb:schinzel} 
Let $k$ be a number field and $P\in \Cc[U,T,Y]$ a polynomial such that, for all but finitely many $u_0\in \Zz$, the polynomial $P(u_0,T,Y)$ has a zero in $k^2$. Does P, viewed as a polynomial in T and Y, have a zero in $k(U)^2$?
\end{question}

We combine previous works producing ``lawful evil" elliptic curves (see, e.g., \cite{DD09, LY20}) and a result of \cite{Deb18} on the branch point number of rational pullbacks of Galois covers of $\mathbb{P}^1$ to obtain infinitely many (conditional) counter-examples to Question \ref{pb:schinzel}. For example, we have the following (see Theorem \ref{thm versus} for a more general result):

\begin{theorem} \label{thm:main4}
Let $Q(T)\in \mathbb{Q}[T]$ be a degree 3 separable polynomial such that the el\-lip\-tic curve given by $Y^2=Q(T)$ has complex multiplication by $\mathbb{Q}(\sqrt{-m})$ for some $m\in \{11, 19, 43, 67, 163\}$.  Set $P(U,T,Y) = Y^2 - UQ(T)$. Under the Birch and Swinnerton-Dyer conjecture, there exist infinitely many quadratic number fields $k$ such that 

\vspace{0.5mm}

\noindent
{\rm{(1)}} the answer to Ques\-tion \ref{pb:schinzel} is negative for $k$ and $P$, and

\vspace{0.5mm}

\noindent
{\rm{(2)}} the polynomial $P(1,T,Y)$ and the field $k$ form a counter-example to this implication:

\vspace{0.5mm}

\noindent
\hskip 40mm {\it {\rm{($*$)}} \, \, $k$-parametric $\Rightarrow$ $k(U)$-parametric.}

\vspace{0.5mm}

\noindent
\end{theorem}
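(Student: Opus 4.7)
The plan is to reformulate both parts of the conclusion in terms of the arithmetic of quadratic twists of the elliptic curve $E/\mathbb{Q}$ defined by $Y^2 = Q(T)$, and then combine the ``lawful evil'' construction for CM twists over varying quadratic fields with a Galois-descent analysis of the generic twist $E^{(U)}/k(U)$.

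First I would reformulate. A zero $(t_0, y_0) \in k^2$ of $P(u_0, T, Y) = Y^2 - u_0 Q(T)$ is either $2$-torsion of $E$ (when $y_0 = 0$) or an affine $k$-point of the quadratic twist $E^{(u_0)} : Y^2 = u_0 Q(T)$. The specialization $Y^2 - Q(t_0)$ of $P(1,T,Y)$ generates $k(\sqrt{Q(t_0)})$, so $k$-parametricity of $P(1, T, Y)$ is equivalent to every class $d \in k^\times/k^{\times 2}$ being of the form $Q(t_0)$ modulo squares for some $t_0 \in k$, equivalently to every quadratic twist $E^{(d)}/k$ carrying a non-trivial $k$-point. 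Taking the class of $U$ in $k(U)^\times/k(U)^{\times 2}$, both the failure of $k(U)$-parametricity for $P(1,T,Y)$ and the negative answer to Question~\ref{pb:schinzel} reduce to the single assertion that $P(U, T, Y)$ has no zero in $k(U)^2$.

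Next I would invoke the ``lawful evil'' construction of \cite{DD09, LY20}: for $E$ as in the hypothesis (CM by $\mathbb{Q}(\sqrt{-m})$ with $m$ a Heegner number), there exist infinitely many quadratic number fields $k$ over which every quadratic twist $E^{(d)}/k$, $d \in k^\times$, has analytic rank at least $1$. Under the Birch and Swinnerton-Dyer conjecture this lifts to algebraic rank $\geq 1$, providing a non-trivial $k$-point on each $E^{(d)}$. Restricting further to $k$ not containing any root of $Q$ excludes only finitely many quadratic fields and ensures $E(k)[2] = \{\infty\}$. For such $k$, every specialization $P(u_0, T, Y)$ with $u_0 \in \mathbb{Z}$ has a $k$-zero and $P(1, T, Y)$ is $k$-parametric.

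For the last ingredient, I would show $P(U, T, Y)$ has no $k(U)$-zero by Galois descent. Over $k(\sqrt{U})$ the twist $E^{(U)}/k(U)$ becomes isomorphic to the constant curve $E \times_k k(\sqrt{U})$, and the fact that $\mathbb{P}^1$ has trivial Jacobian yields $E(k(\sqrt{U})) = E(k)$. Descending, $E^{(U)}(k(U))$ is identified with $\{P \in E(k) : \sigma(P) = -P\}$ for $\sigma$ the non-trivial automorphism of $k(\sqrt{U})/k(U)$; since $\sigma$ acts trivially on $E(k)$, this equals $E(k)[2] = \{\infty\}$, so $P(U, T, Y)$ has no affine $k(U)$-zero. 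This yields both conclusions (1) and (2). The main obstacle is producing the infinite family of quadratic $k$ over which every quadratic twist of the CM curve has analytic rank $\geq 1$: this is precisely where the class-number-one CM hypothesis and BSD enter essentially, while the branch-point bound of \cite{Deb18} provides the extra flexibility needed for the more general Theorem~\ref{thm versus}.
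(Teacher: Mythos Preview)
Your argument is correct and follows the same overall architecture as the paper's: reduce both conclusions to (i) every quadratic twist of $E$ over $k$ has a non-trivial $k$-point, and (ii) the generic twist $E^{(U)}$ has no non-trivial $k(U)$-point; then obtain (i) from the lawful-evil root-number input of \cite{DD09,LY20} together with BSD. Where you genuinely diverge is in the proof of (ii). The paper does not compute $E^{(U)}(k(U))$ directly; instead it invokes the branch-point bound of \cite[Theorem 2.1]{Deb18}: the extension $\Qq(T)(\sqrt{Q(T)})/\Qq(T)$ has four branch points while $\Qq(\sqrt{T})/\Qq(T)$ has two, so $k(U)(\sqrt{U})$ cannot arise as a non-constant specialization, and constant specializations are ruled out via Lemma~\ref{spec}. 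Your Galois-descent computation ($E(k(\sqrt{U}))=E(k)$ by genus, then $E^{(U)}(k(U))=E(k)[2]$) is more elementary and entirely self-contained for this particular statement; the paper's route has the advantage that it applies verbatim in the more general setting of Theorem~\ref{thm versus}, where no genus-one structure is assumed, which is exactly the point you note in your final sentence.

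One small gap to close: your sentence ``restricting further to $k$ not containing any root of $Q$ excludes only finitely many quadratic fields'' presupposes that $Q$ has no \emph{rational} root (otherwise every quadratic field is excluded). This is true, but it is a consequence of the CM hypothesis rather than of separability alone: for $m\in\{11,19,43,67,163\}$ the curve has trivial $\Qq$-torsion by Olson's table \cite{Ols74}, hence trivial rational $2$-torsion, hence $Q$ is irreducible over $\Qq$ (and then automatically over every quadratic field, so in fact no restriction is needed at all). The paper makes this step explicit; you should too.
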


A concrete situation where Theorem \ref{thm:main4} applies is $Q(T)=T^3 - T^2 - 7T + 41/4$, $m=11$, and $k = \Qq(\sqrt{-d})$, where $d >0$ is squarefree and fulfills $(\frac{d}{11}) = 1$ (see Remark \ref{rem:versus}).

All counter-examples to Question \ref{pb:schinzel} known to us are in genus $1$, i.e., when the $\Cc(U)$-curve $P(U,T, Y)=0$ is of genus 1 \footnote{For an earlier counter-example, subject to a conjecture of Selmer, see \cite[pp. 318-319]{Sch00}}. This suggests that the genus 1 case is exceptional. It remains plausible that the answer to Question \ref{pb:schinzel} is ``Yes" in genus $\geq 2$ \footnote{In genus 0, the answer to Question \ref{pb:schinzel} is ``Yes'' (see \cite[Theorem 2]{DLS65/66} and \cite[Theorem 38]{Sch82}).}. The same could be true of Implication ($*$). Indeed, by \cite{Deb18}, a positive answer to (a close variant) of Ques\-tion \ref{pb:schinzel} implies (a close variant of) Implication ($*$). See \S\ref{sec:wh} for more details.

Our results are stated in terms of polynomials $P(T,Y)$. They translate immediately in terms of field extensions $F/k(T)$. The latter viewpoint is the one that we will prefer in the sequel. We refer to Lemma \ref{prop:compare_1} for the exact connection between the two viewpoints.

\vspace{2mm} 

{\it Acknowledgments}. We thank Danny Krashen and the referee for helpful comments. This work was supported in part by the Labex CEMPI (ANR-11-LABX-0007-01), and ISF grants No. 577/15 and No. 696/13.

\section{Preliminaries} \label{sec:notation}

\S\ref{ssec:basic_1} and \S\ref{ssec:basic_2} are devoted to terminology and notation. In \S\ref{ssec:basic_4}, we review the classification of all the finite groups with a one parameter generic polynomial with coefficients in a given field of characteristic zero (see Theorem \ref{prop:classification}). As said in \S\ref{sec:intro}, Corollary \ref{list ext} will more precisely give the list of corresponding polynomials. Finally, in \S\ref{ssec:basic_5}, we briefly discuss \'etale algebras, which will be used to prove Theorem \ref{thm:intro_2} and its generalizations.

\subsection{Basic terminology} \label{ssec:basic_1}

Let $k$ be a field of characteristic zero and $F/k(T)$ a finite Galois extension. Say that $F/k(T)$ is {\it{$k$-regular}} if $F \cap \overline{k}=k$. By the \textit{{branch point set}} of $F/k(T)$, we mean the finite set ${\bf{t}}$ of points $t\in \Pp^1(\overline k)$ such that the associated discrete va-

\newpage

\noindent
luations are ramified in $F\overline{k}/\overline k(T)$. As $k$ has characteristic $0$, we have the \textit{{inertia canonical invariant}} ${\bf C}$ of $F/k(T)$: if ${\bf t}=\{t_1,\ldots,t_r\}$, then ${\bf C}$ is an $r$-tuple $(C_1,\ldots,C_r)$ of conjugacy classes of ${\rm{Gal}}(F\overline{k}/\overline{k}(T))$, and $C_i$ is the conjugacy class of the distinguished generators of the inertia groups $I_{\frak P}$ above $t_i$ in $F \overline{k}/\overline{k}(T)$ (i.e., these generators correspond to  $e^{2i\pi/e_i}$ in the canonical isomorphism  $I_{\frak P} \rightarrow \mu_{e_i} =\langle e^{2i\pi/e_i} \rangle$). We also use the notation ${\bf e}=(e_1,\ldots,e_r)$ for the $r$-tuple whose $i$-th entry is the ramification index $e_i=|I_{\frak P}|$ of primes above $t_i$; $e_i$ is also the order of elements of $C_i$. By the {\it{genus}} of $F/k(T)$, we mean the genus of $F \overline{k}$.

The {\it{specialization}} $F_{t_0}/k$ of $F/k(T)$ at $t_0 \in {\mathbb P}^1(k)$ is defined as follows. For $t_0\in k$ (resp., $t_0=\infty$), the field $F_{t_0}$ is the residue field of the integral closure of $k[T]$ (resp., of $k[1/T]$) in $F$ at any prime ideal $\mathfrak{P}$ containing $T-t_0$ (resp., $1/T$). The extension $F_{t_0}/k$ is Galois and, if $t_0 \not \in {\bf{t}}$, its Galois group is the decomposition group of $F/k(T)$ at $\mathfrak{P}$. If $F$ is the splitting field over $k(T)$ of a monic separable polynomial $P(T,Y) \in k[T][Y]$, then, for $t_0\in k$ with $P(t_0,Y)$ separable, $t_0 \not \in {\bf{t}}$ and $F_{t_0}$ is the splitting field over $k$ of $P(t_0,Y)$.

\begin{lemma} \label{spec}
We have $(FL)_{t_0}=F_{t_0}L$ for every overfield $L \supseteq k$ and every $t_0 \in \mathbb{P}^1(k)$.
\end{lemma}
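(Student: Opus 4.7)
The plan is to match up compatible primes in the integral closures defining both specializations, and then verify $(FL)_{t_0}=F_{t_0}L$ by double inclusion. First I reduce to the case $t_0\in k$ via the change of variable $T\mapsto 1/T$, which handles $t_0=\infty$. Fix a common algebraic closure in which both $F$ and $L$ are embedded, and let $A$ (resp., $B$) be the integral closure of $k[T]$ in $F$ (resp., of $L[T]$ in $FL$); then $A\subseteq B$ since elements of $A$ are already integral over $L[T]\supseteq k[T]$. Pick a prime $\mathfrak{P}\subset A$ above $(T-t_0)\subset k[T]$, so that $F_{t_0}=A/\mathfrak{P}$, and by going-up a prime $\mathfrak{Q}\subset B$ above $\mathfrak{P}$. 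Since $(T-t_0)$ is maximal in $L[T]$ and lies in $\mathfrak{Q}$, the prime $\mathfrak{Q}$ also sits above $(T-t_0)\subset L[T]$, so $(FL)_{t_0}=B/\mathfrak{Q}$ for this compatible choice.

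The inclusion $F_{t_0}L\subseteq (FL)_{t_0}$ is then immediate: the composition $A\hookrightarrow B\twoheadrightarrow B/\mathfrak{Q}$ has kernel $\mathfrak{P}$, so induces $F_{t_0}\hookrightarrow B/\mathfrak{Q}$, while $L\hookrightarrow B/\mathfrak{Q}$ because $L$ is a field.

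For the reverse inclusion I exploit that $FL$, being a compositum, is generated over $L(T)$ by elements of $F$ and hence, after clearing $k[T]$-denominators, by elements of $A$. Any $b\in FL$ can therefore be written $b=\sum_i \ell_i\, a_i/g$ with $\ell_i\in L$, $a_i\in A$, and $g\in k[T]\setminus\{0\}$; whenever $b\in B_{\mathfrak{Q}}$ and such a $g$ can be chosen outside $\mathfrak{P}$, reducing modulo $\mathfrak{Q}$ exhibits $\bar b$ as an element of $F_{t_0}L$. I expect the main obstacle to be precisely this control of the denominator $g$. To handle it I would localize: the ring $L[T]_{(T-t_0)}=L\otimes_k k[T]_{(T-t_0)}$ is unramified over $k[T]_{(T-t_0)}$ with residue extension $L/k$, so the ramification of $\mathfrak{Q}$ over $(T-t_0)\subset L[T]$ equals that of $\mathfrak{P}$ over $(T-t_0)\subset k[T]$. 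Consequently $B_{\mathfrak{Q}}$ arises from $A_{\mathfrak{P}}$ by scalar extension from $k$ to $L$, giving $B_{\mathfrak{Q}}/\mathfrak{Q}B_{\mathfrak{Q}}=F_{t_0}\cdot L$, as required.
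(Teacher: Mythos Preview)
Your setup and the easy inclusion $F_{t_0}L\subseteq (FL)_{t_0}$ are fine, and you correctly isolate the real difficulty: showing that every $b\in B$ (or $B_{\mathfrak{Q}}$) can be written as $\sum_i \ell_i a_i/g$ with $g\notin\mathfrak{P}$. But the last paragraph does not close this gap.

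First, a small error: the equality $L[T]_{(T-t_0)}=L\otimes_k k[T]_{(T-t_0)}$ is false. In the tensor product you may only invert polynomials in $k[T]$ not vanishing at $t_0$, whereas in $L[T]_{(T-t_0)}$ you may invert all such polynomials in $L[T]$; for $L/k$ transcendental these differ.

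The more serious issue is the inference ``the ramification of $\mathfrak{Q}$ over $L[T]$ equals that of $\mathfrak{P}$ over $k[T]$, consequently $B_{\mathfrak{Q}}$ arises from $A_{\mathfrak{P}}$ by scalar extension''. Multiplicativity only gives $e(\mathfrak{Q}/(T-t_0)L[T])=e(\mathfrak{Q}/\mathfrak{P})\cdot e(\mathfrak{P}/(T-t_0)k[T])$; you have no independent reason that $e(\mathfrak{Q}/\mathfrak{P})=1$, and even if you had it, equality of ramification indices would not by itself yield $B_{\mathfrak{Q}}\cong A_{\mathfrak{P}}\otimes_k L$. What you are really asserting here---that the integral closure of $L[T]$ in $FL$ is $A\otimes_{k[T]}L[T]$ (globally, or its localization)---is exactly the nontrivial input, and it genuinely uses that $L/k$ is separable (automatic in characteristic~$0$). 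This is precisely the step the paper handles by choosing $k[T]$-module generators $b_1,\dots,b_s$ of $A$ and invoking \cite[Proposition~3.4.2]{FJ08} to conclude $B=L[T]b_1+\cdots+L[T]b_s$; once that is known, reducing modulo $\mathfrak{Q}$ immediately gives $(FL)_{t_0}=L(\overline{b_1},\dots,\overline{b_s})=F_{t_0}L$. Your argument needs an analogous normality/base-change statement; the ramification bookkeeping alone does not supply it.
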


\begin{proof}
Without loss, we may assume $t_0 \not=\infty$. Denote the integral closure of $k[T]$ in $F$ by $B_k$. Pick $s \geq 1$ and $b_1,\dots,b_{s}$ in $B_k$ with $B_k=k[T] b_1 + \dots + k[T]b_{s}.$ As $k$ has characteristic zero, $L/k$ is separable (in the sense of non-necessarily algebraic extensions; see, e.g., \cite[Chapter VIII, \S4]{Lan02}). Then, by, e.g., \cite[Proposition 3.4.2]{FJ08}, the integral closure $B_L$ of $L[T]$ in $FL$ equals $L[T]b_1 + \dots + L[T]b_{{s}}.$ Let $\frak{P}_L$ be a prime ideal of $B_L$ containing $T-t_0$. Then the restriction $\frak{P}_k=\frak{P}_L \cap B_k$ of $\frak{P}_L$ to $B_k$ also contains $T-t_0$. We then have $(F L)_{t_0}=B_L/\frak{P}_L = L(\overline{b_1},\dots, \overline{b_{s}}),$ where $\overline{b_1},\dots, \overline{b_{s}}$ denote the reductions modulo $\frak{P}_L$ of ${b_1},\dots, {b_{s}}$, respectively. But these reductions modulo $\frak{P}_L$ are the reductions $\underline{b_1}, \dots, \underline{b_{s}}$ modulo $\frak{P}_k$ of $b_1,\dots,b_{s}$, respectively. Hence, $(F L)_{t_0} = k(\underline{b_1}, \dots, \underline{b_{s}})L = F_{t_0}L$.
\end{proof}

\subsection{Generic and parametric extensions} \label{ssec:basic_2}

Given a finite group $G$ and a field $k$ (of characteristic zero), we will use the following notation.

\noindent
$\bullet$ ${\sf R}_G(k)$: set of all Galois extensions $E/k$ of group $G$.

\noindent
$\bullet$ ${\sf R}_{\leq G}(k)$: set of all Galois extensions $E/k$ of group contained in $G$.

\noindent
$\bullet$ For a finite Galois extension $F/k(T)$ of branch point set ${\bf{t}}$, we define
$${\sf SP}(F/k(T)) = \{ F_{t_0}/k \hskip 2pt | \hskip 1mm t_0\in \Pp^1(k) \setminus {\bf{t}} \} \, \, \, \, \, ({\sf{SP}} \,  \, {\rm{for}} \, \, ``{\rm{SPecialization}}").$$

\begin{definition} \label{def:main} 
Let $k$ be of characteristic 0 and $F/k(T)$ a Galois extension of group $G$.

\vskip 0.5mm

\noindent
(1) Given an overfield $L \supseteq k$, the extension $F/k(T)$ is {\it $L$-parametric} (resp., {\it{strongly $L$-parametric}}) if ${\sf SP}(FL/L(T)) \supseteq {\sf R}_{G}(L)$ (resp., if ${\sf SP}(FL/L(T)) = {\sf R}_{\leq G}(L)$).

\vskip 0.5mm

\noindent
(2) The extension $F/k(T)$ is ${\it{generic}}$ if it is $L$-parametric for every overfield $L \supseteq k$.
\end{definition} 

The field extension viewpoint used in Definition \ref{def:main} is of course equivalent to the polynomial one used in \S\ref{sec:intro}. The next two lemmas, which will be used on several occasions in the sequel, provide the precise arguments to pass from one viewpoint to the other.

\begin{lemma} \label{prop:compare_1}
Let $k$ be a field of characteristic zero, $G$ a finite group, $F/k(T)$ a Galois extension of group $G$, and $P(T,Y) \in k[T][Y]$ a monic separable polynomial of splitting field $F$ over $k(T)$. Let $L \supseteq k$ be any overfield.

\vspace{0.5mm}

\noindent
{\rm{(1)}} Let $E/L \in {\sf{R}}_{G}(L)$. If $E$ is the splitting field over $L$ of $P(t_0,Y)$ for some $t_0 \in L$, then $E/L \in {\sf{SP}}(FL/L(T))$.

\vspace{0.5mm}

\noindent
{\rm{(2)}} Let $E/L \in {\sf{R}}_{\leq G}(L)$. If there exist infinitely many $t_0 \in \mathbb{P}^1(L)$ such that $E/L = (FL)_{t_0}/L$, then $E$ is the splitting field over $L$ of $P(t_0,Y)$ for infinitely many $t_0 \in L$.
\end{lemma}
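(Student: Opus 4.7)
My plan is to handle both parts by identifying the splitting field of $P(t_0,Y)$ over $L$ with the specialization $(FL)_{t_0}$ for appropriate $t_0$. Let ${\bf{t}}'$ denote the branch point set of $FL/L(T)$ and $B_L$ the integral closure of $L[T]$ in $FL$. Since $P(T,Y)$ is monic in $Y$, its roots $\alpha_1,\dots,\alpha_n \in F$ are integral over $L[T]$, so $\alpha_1,\dots,\alpha_n \in B_L$; for any prime $\mathfrak{P} \subset B_L$ containing $T-t_0$, the reductions $\overline{\alpha_1},\dots,\overline{\alpha_n}$ account for all roots of $P(t_0,Y)$ in $(FL)_{t_0} = B_L/\mathfrak{P}$.

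For part (1), I would first deduce from the above that the splitting field $E$ of $P(t_0,Y)$ over $L$ embeds inside $(FL)_{t_0}$ as $L(\overline{\alpha_1},\dots,\overline{\alpha_n})$. I would then sandwich the relevant degrees:
\[
|G| \;=\; [E:L] \;\leq\; [(FL)_{t_0}:L] \;\leq\; [FL:L(T)] \;\leq\; |G|,
\]
the last inequality because $\Gal(FL/L(T))$ embeds in $G = \Gal(F/k(T))$ by restriction. All inequalities must therefore be equalities. This forces $\Gal(FL/L(T)) = G$, $E=(FL)_{t_0}$, and the decomposition group at $\mathfrak{P}$ to be all of $G$ with trivial inertia; in particular $t_0 \notin {\bf{t}}'$, so $E/L = (FL)_{t_0}/L \in {\sf{SP}}(FL/L(T))$.

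For part (2), I would appeal directly to the principle recalled in \S\ref{ssec:basic_1}, applied to the monic separable polynomial $P(T,Y) \in L[T][Y]$ and the extension $FL/L(T)$: whenever $P(t_0,Y)$ is separable, $t_0 \notin {\bf{t}}'$ and $(FL)_{t_0}$ is the splitting field of $P(t_0,Y)$ over $L$. The discriminant $\mathrm{disc}_Y P(T,Y) \in k[T]$ is non-zero and hence vanishes for only finitely many $t_0$; together with $\{\infty\}$ and the finite set ${\bf{t}}'$, this excludes only finitely many elements of the given infinite set, leaving infinitely many $t_0 \in L$ for which $(FL)_{t_0} = E$ and $P(t_0,Y)$ is separable. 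For each such $t_0$, $E$ is then the splitting field of $P(t_0,Y)$ over $L$.

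The main (small) obstacle will be the degree squeeze in part (1): one has to convert the a priori weak inclusion $E \subseteq (FL)_{t_0}$ into the equality $E = (FL)_{t_0}$ and simultaneously conclude that $t_0$ is unramified in $FL/L(T)$, and the displayed chain of inequalities is precisely what accomplishes both at once.
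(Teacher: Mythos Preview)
Your proof is correct and follows essentially the same approach as the paper's: in (1) the paper uses the very same containment of the splitting field of $P(t_0,Y)$ in $(FL)_{t_0}$ followed by the degree comparison $|G|=[E:L]\leq [(FL)_{t_0}:L]\leq |G|$ to force equality and unramifiedness, and in (2) it likewise discards the finitely many $t_0$ with $t_0=\infty$ or $P(t_0,Y)$ inseparable and invokes the fact recalled in \S\ref{ssec:basic_1}. Your write-up is simply more explicit about why the splitting field sits inside the residue field and about the full chain of inequalities.
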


\begin{proof}
(1) Assume there is $t_0 \in L$ such that $E$ is the splitting field over $L$ of $P(t_0,Y)$. It is always true that the splitting field of $P(t_0,Y)$ is contained in the field $(FL)_{t_0}$. As, in our situation, the former is of degree $|G|$ over $L$, both fields coincide and we get $E=(FL)_{t_0}$. To conclude, note that $t_0$ is not a branch point of $F/k(T)$, since ${\rm{Gal}}((FL)_{t_0}/L)=G$.

\vspace{0.5mm}

\noindent
(2) By the assumption, there are infinitely many $t_0 \in \mathbb{P}^1(L)$ with $E=(FL)_{t_0}$. For such a $t_0$ with $t_0 \not = \infty$ and $P(t_0,Y)$ separable, the splitting field of $P(t_0,Y)$ over $L$ equals $(FL)_{t_0}$ (as recalled in \S\ref{ssec:basic_1}), i.e., equals $E$.
\end{proof}

Our second lemma adjusts \cite[Proposition 5.1.8]{JLY02} to our situation:

\begin{lemma} \label{prop:compare_2}
Let $k$ be a field of characteristic zero and $F/k(T)$ a generic extension. Then there exists a generic polynomial $P(T,Y) \in k[T][Y]$ of splitting field $F$ over $k(T)$.
\end{lemma}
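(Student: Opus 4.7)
The plan is to build $P$ as the minimal polynomial of a primitive element of $F/k(T)$ (after clearing denominators in its $Y$-coefficients), and then to promote the extension-level genericity of $F/k(T)$ to polynomial-level genericity by means of Lemma \ref{prop:compare_1}(2).

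First, I would use the primitive element theorem to write $F = k(T)(\alpha)$ and take the monic minimal polynomial of $\alpha$ over $k(T)$. Replacing $\alpha$ by $d(T)\alpha$ for a suitable $d(T) \in k[T]$ clearing the denominators of that minimal polynomial's coefficients, I obtain a new primitive element whose monic minimal polynomial $P(T,Y)$ lies in $k[T][Y]$; by construction, $P$ is monic separable with splitting field $F$ over $k(T)$.

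To check that $P$ is generic, I would fix an overfield $L \supseteq k$ and an extension $E/L \in {\sf R}_G(L)$. By Lemma \ref{prop:compare_1}(2), it is enough to exhibit infinitely many $t_0 \in \mathbb{P}^1(L)$ such that $(FL)_{t_0} = E$. To produce this supply, I would introduce a new indeterminate $U$ and apply the genericity of $F/k(T)$ to the overfield $L(U) \supseteq k$: since $L$ is algebraically closed in $L(U)$, the composite $EL(U)/L(U)$ still has Galois group $G$, so there exists $s \in \mathbb{P}^1(L(U)) \setminus \mathbf{t}$ with $(FL(U))_s = EL(U)$. If $s$ is non-constant in $U$, then specializing $U \mapsto u_0$ for all but finitely many $u_0 \in L$, and invoking the compatibility of $T$-specialization with the base change $L \hookrightarrow L(U)$ in the spirit of Lemma \ref{spec}, would produce the desired infinite family of identities $(FL)_{s(u_0)} = E$. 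If instead $s \in \mathbb{P}^1(L)$ is constant, then Lemma \ref{spec} yields $(FL(U))_s = (FL)_s \cdot L(U) = EL(U)$, whence $(FL)_s = E$ upon taking algebraic parts over $L$; iterating the construction over the tower $L \subset L(U_1) \subset L(U_1,U_2) \subset \cdots$ or performing a Möbius change of coordinate in $T$ would eventually yield a non-constant $s$, and hence infinitely many distinct finite $t_0$.

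The main obstacle I anticipate is exactly the degenerate case in which the specialization $s$ furnished by $L(U)$-parametricity is constant — most acutely when $s = \infty$ — because Lemma \ref{spec} then delivers only a single valid $t_0 \in \mathbb{P}^1(L)$, and possibly not one in $L$ at all. Ruling out or systematically circumventing this pathology (via iterated base changes and a Möbius normalization of $T$) is the key technical ingredient of the argument; once infinitely many $t_0 \in \mathbb{P}^1(L)$ are secured, Lemma \ref{prop:compare_1}(2) closes the argument automatically.
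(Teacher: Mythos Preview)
Your proposal has a genuine gap in the step where you try to upgrade a single specialization point to infinitely many. Genericity of $F/k(T)$ gives, for each pair $(L,E)$, only the existence of \emph{one} $t_0\in\mathbb{P}^1(L)\setminus\mathbf{t}$ with $(FL)_{t_0}=E$, and nothing prevents this $t_0$ from being $\infty$ or one of the finitely many points where your $P(t_0,Y)$ is inseparable. Your bootstrapping via $L(U)$ does not escape this: the point $s\in\mathbb{P}^1(L(U))$ supplied by $L(U)$-parametricity may well be the \emph{same} constant $t_0$ you already had, and iterating over $L(U_1,U_2,\dots)$ can return that same constant at every stage. A M\"obius change in $T$ merely permutes the finite bad set and the finite set of valid $t_0$'s; it does not manufacture new ones. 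You correctly flag this as the obstacle, but the remedies you sketch are not arguments.

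The paper avoids the issue altogether by choosing the polynomial differently. Instead of a single primitive element, it takes $k[T]$-module generators $b_1,\dots,b_s$ of the integral closure $B_k$ of $k[T]$ in $F$ and sets $P_1(T,Y)$ equal to the product of their distinct minimal polynomials. Then for \emph{every} $t_0\in L$ one has $(FL)_{t_0}=B_L/\mathfrak{P}_L=L(\overline{b_1},\dots,\overline{b_s})$, which is exactly the splitting field of $P_1(t_0,Y)$ --- no separability hypothesis on $P_1(t_0,Y)$ is needed, and a single $t_0$ coming from genericity already suffices. A companion polynomial $P_2$ (built from the chart at $\infty$) handles $t_0=\infty$, and \cite[Corollary 1.1.6]{JLY02} then shows that one of $P_1,P_2$ is itself generic. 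The moral is that the right fix is not to look for more specialization points, but to build $P$ so that one point is always enough.
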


\begin{proof}
First, denote the integral closure of $k[T]$ in $F$ by $B_k$. Pick a positive integer $s$ and an $s$-tuple $(b_1,\dots,b_s)$ of elements of $B_k$ with $B_k=k[T] b_1 + \dots + k[T]b_s$. Up to reordering, we may assume there exists $s' \leq s$ satisfying these two conditions:

\noindent
$\bullet$ for $1 \leq i \not= j\leq s'$, $b_i$ and $b_j$ are not conjugate over $k(T)$,

\noindent
$\bullet$ for $i > s'$, there exists $1 \leq j \leq s'$ such that $b_i$ and $b_j$ are conjugate over $k(T)$.

\noindent
For each $i \in \{1,\dots,s'\}$, denote the minimal polynomial of $b_i$ over $k(T)$ by $m_i(T,Y)$. Set $P_1(T,Y) = \prod_{i=1}^{s'} m_i(T,Y)$. Then $P_1(T,Y)$ is a monic separable polynomial with coefficients in $k[T]$, and its splitting field over $k(T)$ is equal to $F$.

Now, let $L \supseteq k$ and $E/L \in {\sf{R}}_{\leq G}(L)$ (with $G = {\rm{Gal}}(F/k(T))$). Assume $E/L=(FL)_{t_0}/L$ for some $t_0 \in L$. Let $\mathfrak{P}_L$ be a maximal ideal of the integral closure $B_L$ of $L[T]$ in $FL$ containing $T-t_0$. As in the proof of Lemma \ref{spec}, we have $B_L= L[T] b_1 + \dots + L[T] b_s$. Hence, with $\overline{b_1}, \dots, \overline{b_s}$ the respective reductions modulo $\mathfrak{P}_L$ of $b_1,\dots,b_s$, we have $(FL)_{t_0} = B_L/\mathfrak{P}_L = L(\overline{b_1},\dots,\overline{b_s})$. Thus, $E$ is the splitting field over $L$ of $P_1(t_0,Y)$.

Next, if we replace $T$ by $1/T$, the same arguments yield a monic separable polynomial $P_2(T,Y) \in k[T][Y]$ of splitting field $F$ over $k(T)$ which fulfills this: for all $L \supseteq k$ and $E/L \in {\sf{R}}_{\leq G}(L)$, if $E=(FL)_\infty$, then $E$ is the splitting field over $L$ of $P_2(0,Y)$.

Finally, since $F/k(T)$ is assumed to be generic, we get the following: for every overfield $L \supseteq k$ and every extension $E/L \in {\sf{R}}_G(L)$, there are $i \in \{1,2\}$ and $t_0 \in L$ such that $E$ is the splitting field over $L$ of $P_i(t_0,Y)$. It then remains to use \cite[Corollary 1.1.6]{JLY02} to get that either $P_1(T,Y)$ or $P_2(T,Y)$ is generic.
\end{proof}

\subsection{Finite groups with a one parameter generic polynomial/extension} \label{ssec:basic_4}

We give the classification of these groups over any given field of characteristic zero:

\begin{theorem} \label{prop:classification}
Let $k$ be a field of characteristic $0$ and $G$ a finite group. Then the following three conditions are equivalent:

\vspace{0.5mm}

\noindent
{\rm{(1)}} there exists a generic extension $F/k(T)$ of group $G$,

\vspace{0.5mm}

\noindent
{\rm{(2)}} there exists a generic polynomial $P(T,Y) \in k[T][Y]$ of group $G$,

\vspace{0.5mm}

\noindent
{\rm{(3)}} one of the following three conditions holds:

\vspace{0.5mm}

{\rm{(a)}} $G$ is cyclic of even order $n$ and $e^{2i \pi/n} \in k$,

\vspace{0.5mm}

{\rm{(b)}} $G$ is cyclic of odd order $n$ and $e^{2i \pi/n} + e^{-2i \pi/n} \in k$,

\vspace{0.5mm}

{\rm{(c)}} $G$ is dihedral of order $2n$ with $n \geq 3$ odd and $e^{2i \pi/n} + e^{-2i \pi/n} \in k$.
\end{theorem}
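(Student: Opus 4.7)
The implication (2) $\Rightarrow$ (1) is immediate: if $P(T,Y)$ is $L$-parametric, the splitting field $F$ of $P$ over $k(T)$ is $L$-parametric as well, by Lemma \ref{prop:compare_1}(1). The reverse implication (1) $\Rightarrow$ (2) is exactly Lemma \ref{prop:compare_2} above. Thus the real content lies in the equivalence (1) $\Leftrightarrow$ (3), which I would split into a constructive direction and a three-stage classification.

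For (3) $\Rightarrow$ (1), I would appeal to classical constructions. In case (a), Kummer theory does the job: the splitting field of $Y^n-T$ over $k(T)$ is $k(T^{1/n})$, cyclic of order $n$, and every cyclic extension of any overfield $L\supseteq k$ of order dividing $n$ is the splitting field of $Y^n - t_0$ for a suitable $t_0 \in L$. In cases (b) and (c), explicit one-parameter generic polynomials are available in the literature under the given roots-of-unity hypotheses, for instance via the constructions surveyed in \cite[Chapters 2 and 5]{JLY02}.

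For (1) $\Rightarrow$ (3), I would follow the three-stage argument sketched in \S\ref{sec:intro}. \emph{Stage 1}: assuming $F/k(T)$ generic of group $G$, apply the generic hypothesis to the overfield $k({\bf T})^G$ with ${\bf T}=(T_1,\ldots,T_{|G|})$. The Noether invariant extension $k({\bf T})/k({\bf T})^G$ must then lie in ${\sf SP}(F\cdot k({\bf T})^G(T)/k({\bf T})^G(T))$, which by \cite[Proposition 8.1.4]{JLY02} forces $G \subset {\rm{PGL}}_2(k)$. \emph{Stage 2}: Buhler--Reichstein essential-dimension theory \cite{BR97} implies that the existence of a one-parameter generic extension forces the essential dimension of $G$ over $k$ to be at most~$1$; in particular, $G$ contains no non-cyclic abelian --- a fortiori no Klein four --- subgroup. \emph{Stage 3}: Combining these with the classification of finite subgroups of ${\rm{PGL}}_2(\overline k)$ (cyclic, dihedral, $A_4$, $S_4$, $A_5$), the Klein four constraint eliminates $A_4$, $S_4$, $A_5$, and every even-dihedral group. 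This leaves $G$ cyclic or $G$ dihedral of order $2n$ with $n\geq 3$ odd.

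The hardest piece will be making Stage~3 quantitatively precise --- that is, descending the containment $G \subset {\rm{PGL}}_2(\overline k)$ down to $G \subset {\rm{PGL}}_2(k)$ and producing exactly the roots-of-unity conditions in (a)--(c). For a cyclic generator of order $n$ in ${\rm{PGL}}_2$, the eigenvalues of a determinant-$1$ lift are a primitive $2n$-th root of unity and its inverse, and the resulting trace criterion splits naturally into the even case ($e^{2i\pi/n}\in k$, as in (a)) and the odd case ($e^{2i\pi/n}+e^{-2i\pi/n}\in k$, as in (b)). In the dihedral case, once the cyclic subgroup of odd order $n$ has been descended to $k$, the descent of the involution to $k$ follows by a Hilbert~90 / quaternion-algebra twisting argument, yielding exactly condition (c).
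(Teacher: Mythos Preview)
Your handling of $(1)\Leftrightarrow(2)$ and of the constructive direction $(3)\Rightarrow(1)$ matches the paper's proof. The gap is in the final paragraph, where you try to extract the precise root-of-unity conditions (a)--(c) from the embedding $G\subset{\rm PGL}_2(k)$ via an eigenvalue/trace analysis.

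The problem is the even cyclic case. First, a determinant-$1$ lift of an order-$n$ element of ${\rm PGL}_2(k)$ need not lie in ${\rm GL}_2(k)$: the lift over $k$ has some determinant $d\in k^\times$, and normalizing requires $\sqrt{d}\in k$, which is not guaranteed. Second, even if the determinant-$1$ trace $\zeta_{2n}+\zeta_{2n}^{-1}$ did lie in $k$, this is \emph{not} equivalent to $\zeta_n\in k$ when $n$ is even: for $n=4$ one has $\zeta_8+\zeta_8^{-1}=\sqrt{2}$, and $\Qq(\sqrt{2})\neq\Qq(i)$. In fact, the embedding $\Zz/n\Zz\subset{\rm PGL}_2(k)$ is equivalent only to $\zeta_n+\zeta_n^{-1}\in k$, regardless of parity. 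Concretely, $\Zz/4\Zz\subset{\rm PGL}_2(\Qq)$ (the companion matrix of $X^2-2X+2$ has order $4$ in ${\rm PGL}_2(\Qq)$), yet $i\notin\Qq$ and there is no one-parameter generic polynomial for $\Zz/4\Zz$ over $\Qq$. So Stages~1 and~3 together do not yield condition~(a).

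What actually rules out such cases is your Stage~2 conclusion ${\rm ed}_k(G)\leq 1$, but you only extract from it the coarse consequence ``no non-cyclic abelian subgroup''. The paper instead invokes the classification of groups of essential dimension~$1$ due to Chu--Hu--Kang--Zhang \cite[Theorems~1.3 and~1.4]{CHKZ08}: for $G=\Zz/n\Zz$, ${\rm ed}_k(G)=1$ forces $\zeta_n\in k$ when $n$ is even and $\zeta_n+\zeta_n^{-1}\in k$ when $n$ is odd, and similarly for the odd dihedral case. Since you already have ${\rm ed}_k(G)\leq 1$, citing \cite{CHKZ08} at that point (rather than attempting the ${\rm PGL}_2$ eigenvalue descent) closes the argument.
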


\begin{proof}
Implication (2) $\Rightarrow$ (1) is an immediate consequence of Lemma \ref{prop:compare_1}(1) while Lemma \ref{prop:compare_2} yields Implication (1) $\Rightarrow$ (2). It then remains to show (2) $\Leftrightarrow$ (3). This equivalence is known to experts but does not seem to appear explicitly in the literature. For the convenience of the reader, we recall the main ingredients.

First, assume there is a generic polynomial $P(T,Y) \in k[T][Y]$ of group $G$. By \cite[Proposition 8.2.4]{JLY02}, the {\it{essential dimension}} of $G$ over $\overline{k}$ is 1. Over fields of characte\-ris\-tic 0 containing all roots of unity, such groups are exactly cyclic groups and dihedral groups of order $2n$ with $n \geq 3$ odd (see \cite[Theorem 6.2]{BR97}). Hence, $G$ is cyclic or dihedral of order $2n$ with $n \geq 3$ odd.

Now, suppose $G=\Zz/n\Zz$ ($n \geq 2$). Using again \cite[Proposition 8.2.4]{JLY02}, if there is a generic polynomial $P(T,Y) \in k[T][Y]$ of group $G$, the essential dimension of $G$ over $k$ is 1. Theorem 1.3 of \cite{CHKZ08} then yields $e^{2i \pi/n} + e^{-2i \pi/n} \in k$ if $n$ is odd, and $e^{2i \pi/n} \in k$ if $n$ is even. Conversely, assume either $n$ is even and $e^{2i \pi/n} \in k$, or $n$ is odd and $e^{2i \pi/n} + e^{-2i \pi/n} \in k$. In the even case, $Y^n-T$ has Galois group $G$ over $k(T)$ and is generic (by the Kummer theory). Now, in the odd case, there is a generic polynomial $P(T,Y) \in k[T][Y]$ of group $G$, by \cite[\S2.1 and Exercise 5.13]{JLY02}. 

Finally, given $n\geq 3$ odd, suppose $G$ is dihedral of order $2n$. If there is a generic polynomial $P(T,Y) \in k[T][Y]$ of group $G$, then, by \cite[Proposition 8.2.4]{JLY02} and \cite[Theorem 1.4]{CHKZ08}, we have $e^{2i \pi/n} + e^{-2i \pi/n} \in k$. Conversely, if $e^{2i \pi/n} + e^{-2i \pi/n} \in k$, then, by a classical construction of Hashimoto and Miyake (see \cite[Theorem 5.5.4]{JLY02}), there is a generic polynomial $P(T,Y) \in k[T][Y]$ of group $G$.
\end{proof}

\begin{remark}
Groups with essential dimension 1 over a given field $k$ (of any charac\-teristic) are classified in \cite{CHKZ08}. It might then be possible to derive the list of all groups with a one parameter generic polynomial/extension over $k$, as done above in characteristic $0$. 
\end{remark}

\subsection{\'Etale algebras} \label{ssec:basic_5}

Let $k$ be a field of characteristic $0$. A general reference for this section is  \cite[\S4.3]{JLY02}. Every $G$-Galois extension $L/k$ of \'etale algebras is an {\it induced} from a Galois field extension $L'/k$ whose Galois group $H={\rm{Gal}}(L'/k)$ is a subgroup of $G$. In particular, $L=\bigoplus_{\sigma H\in G/H} \sigma(L')$. The Galois group of the field extension $L'/k$ is then its stabilizer under the action of $G$. The {\it underlying field} $L'$ is determined up to choosing a direct summand of $L$. When picking a conjugate copy $\sigma(L')$, the resulting stabilizer is the conjugate subgroup $\sigma H\sigma^{-1}$. 

\begin{remark}\label{rem:comp}
Given an overfield $M \supseteq k$, the tensor product $(L\otimes_k M)/M$ is well-known to be a $G$-Galois extension of \'etale algebras, whose underlying field is a compositum $L\cdot M$ via some embedding of $\oline k$ into $\oline M$. 
\end{remark}

The induced $G$-Galois extension $k^{|G|}$ from the trivial extension $k/k$ is called the {\it split} $G$-Galois extension. 

\section{Parametricity and Genericity} \label{sec:Parametricity} 

\S\ref{ssec:main} states the main results of this section. These results are proved in \S\ref{ssec:proof1_2}--\S\ref{ssec:proof2}. Fi\-nal\-ly, in \S\ref{ssec:proof_0}, we explain how Theorem \ref{thm:intro_2} and Corollary \ref{cor:list} are derived. 

\subsection{Main results} \label{ssec:main}

Let $G$ be a non-trivial finite group, $k$ a field of characteristic 0, and $U, V$ two indeterminates. Let $F/k(T)$ be a Galois extension of group $G$ and branch point set ${\bf{t}}=\{t_1,\dots,t_r\}$. We also denote the genus of $F$ by $g$ and the ramification indices of $t_1, \dots, t_r$ by $e_1, \dots, e_r$, respectively. The unordered $r$-tuple $(e_1,\dots,e_r)$ is denoted by {\textbf{e}}. 

The main topic of this section is this question: 

\vspace{0.75mm}

\noindent
{\it{$(*)$ Given an overfield $L \supseteq k$, is $F/k(T)$ $L$-parametric?}} 

\vspace{0.75mm}

\noindent
In the next result, we give three explicit base changes $L_1/k$, $L_2/k$, and $L_3/k$, independent of either the extension $F/k(T)$ or the group $G$, such that the answer to ($*$) is negative in general, if $L$ is taken among the fields $L_1$, $L_2$, and $L_3$.

Recall that a field $K$ is {\it{ample}} (or {\it{large}}) if every smooth $K$-curve has 0 or infinitely many $K$-rational points. Ample fields include algebraically closed fields, the complete va\-lued fields $\Qq_p$, $\mathbb{R}$, $\kappa((Y))$, the field $\Qq^{\rm{tr}}$ of totally real numbers (algebraic numbers such that all conjugates are real). See \cite{Jar11, BSF13, Pop14} for more details.

\begin{theorem} \label{list}
{\rm{(1)}} Let $K \supseteq k$ be an ample overfield and $L_1=K(U)$. Assume $g \geq 1$. Then ${\sf{R}}_G(L_1) \setminus {\sf{SP}}(FL_1/L_1(T))$ contains infinitely many $K$-regular extensions.

\vspace{0.5mm}

\noindent
{\rm{(2)}} Let $K \supseteq k$ be an algebraically closed overfield and $L_2=K((V))(U)$. Assume $G$ has a non-cyclic abelian subgroup. Then ${\sf{R}}_G(L_2) \setminus {\sf{SP}}(FL_2/L_2(T))$ contains infinitely many $K((V))$-regular extensions.

\vspace{0.5mm}

\noindent
{\rm{(3)}} Let $L_3=k(U)$. Assume either one of the following two conditions holds:

\vspace{0.5mm}

{\rm{(a)}} $G$ is cyclic of even order, $r=2$, and ${\bf{t}} \not \subset \mathbb{P}^1(k)$,

\vspace{0.5mm}

{\rm{(b)}} $G$ is dihedral of order $2n$ with $n \geq 3$ odd, $r =3$, and ${\bf{t}} \not \subset \mathbb{P}^1(k)$.

\vspace{0.5mm}

\noindent
Then ${\sf{R}}_G(L_3) \setminus {\sf{SP}}(FL_3/L_3(T))$ contains infinitely many $k$-regular extensions. 
\end{theorem}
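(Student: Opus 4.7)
The three parts correspond to three different obstructions, matching the three families of tools listed in the introduction: geometric specialization (from \cite{DKLN18}) for (1), patching over valued fields (from \cite{HHK11}) for (2), and arithmetic specialization (from \cite{KLN19}) for (3). In each case the plan is the same: produce an infinite family of $G$-Galois extensions of $L_i$ which are $K$- or $k$-regular, and show via the relevant obstruction that none of them lies in ${\sf SP}(FL_i/L_i(T))$.

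For (1), observe that a specialization $(FL_1)_{t_0}/L_1$ at a value $t_0 \in L_1 = K(U)$ corresponds geometrically to a rational section of the pullback of the $G$-cover $X \to \Pp^1_k$ associated to $F/k(T)$ along the morphism $\Pp^1_K \to \Pp^1_k$ defined by $t_0$. Since $g \geq 1$, every such pullback is a $K$-curve of genus $\geq 1$, so the set of Galois data arising as specializations is thin in the sense of \cite{DKLN18}. On the other hand, $K$ being ample makes the regular inverse Galois problem over $K(U)$ abundant: it produces infinitely many $K$-regular $G$-Galois extensions of $K(U)$ with prescribed branch behavior. A geometric specialization argument in the spirit of \cite{DKLN18} then rules out all but finitely many of them from ${\sf SP}(FL_1/L_1(T))$.

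For (2), I would fix a non-cyclic abelian subgroup $A \leq G$ with a decomposition $A \cong \Zz/m_1\Zz \times \Zz/m_2\Zz$, $m_1, m_2 \geq 2$. The patching framework of \cite{HHK11} over $L_2 = K((V))(U)$, with $K$ algebraically closed, allows the construction of $K((V))$-regular $G$-Galois extensions of $L_2$ whose inertia at two distinct $K((V))$-rational points of $\Pp^1_U$ is prescribed, realizing two independent cyclic subgroups of $A$. However, any specialization $(FL_2)_{t_0}/L_2$ inherits its ramification from the (cyclic) inertia of $F/k(T)$; an Abhyankar-type argument then forces the inertia groups above the $K((V))$-Galois orbits in $\Pp^1_U$ coming from a single branch point of $F/k(T)$ to fit, up to conjugation, into a single cyclic subgroup. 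Choosing the prescribed local inertia in $A$ to be non-cyclic then produces the desired infinite family.

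For (3), Riemann--Hurwitz together with the assumptions on $r$ and on the order of $G$ forces the cover $X \to \Pp^1_k$ to have genus $0$ in both cases (a) and (b), so no geometric obstruction survives and the argument must be purely arithmetic. The hypothesis ${\bf t} \not\subset \Pp^1(k)$ means that ${\bf t}$ contains a $k$-conjugate pair of non-$k$-rational branch points; hence for any $t_0 \in k(U)$ the ramification divisor of $(Fk(U))_{t_0}/k(U)$ on $\Pp^1_U$ decomposes into $k$-Galois orbits dictated by ${\bf t}$, and in particular cannot be supported only on $k$-rational places of odd cardinality. Using a twisted Kummer description of cyclic $G$-extensions of $k(U)$ in (a), and reducing (b) to (a) via the unique index-$2$ cyclic subextension of the dihedral extension, I would construct infinitely many $k$-regular $G$-extensions of $k(U)$ whose ramification divisor is supported on $k$-rational places that cannot be grouped into such orbits. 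The main obstacle I anticipate is the dihedral-to-cyclic reduction in (b), together with the need to ensure that the constructed extensions realize $G$ itself rather than some proper subgroup; this is exactly the kind of arithmetic specialization control provided by \cite{KLN19}.
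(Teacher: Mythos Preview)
Your outline for (1) is essentially the paper's argument: ampleness of $K$ supplies infinitely many $K$-regular $G$-extensions of $K(U)$, and \cite{DKLN18} handles the non-constant specializations when $g\geq 1$, with constant specializations ruled out by a direct argument via Lemma~\ref{spec}. Your sketch of (3) is also in the right spirit, including the reduction of the dihedral case to the quadratic subextension; the actual tool is the Specialization Inertia Theorem of \cite{Leg16} (the paper's reference to \cite{KLN19} in the introduction concerns part (2), not (3)): one shows that if infinitely many of the constructed $k$-regular extensions were specializations, then some branch point of $F/k(T)$ would have to be $k$-rational, and then the Branch Cycle Lemma forces the whole of $\mathbf{t}$ into $\mathbb{P}^1(k)$.

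Your plan for (2), however, has a genuine gap. Prescribing \emph{cyclic} inertia at two distinct points $u_1,u_2\in\mathbb{P}^1(M)$, generating together a non-cyclic subgroup $A$, does not obstruct membership in $\mathsf{SP}(FL_2/L_2(T))$: for a specialization $(FL_2)_{t_0}$, the points $u_1$ and $u_2$ may well sit over \emph{different} branch points of $F/k(T)$, yielding two unrelated cyclic inertia groups. Nothing forces them to lie in a common cyclic subgroup, so no contradiction arises from your ``Abhyankar-type'' constraint. The paper's mechanism is different and crucially exploits the Laurent-series shape of $M=K((V))$. On the obstruction side (this is where \cite{KLN19} enters), one shows that for any specialization $(FL_2)_{t_0}/L_2$ and all but finitely many $u\in M$, the \emph{completion at a single prime} $\mathfrak{P}_u=\langle U-u\rangle$ has cyclic Galois group. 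On the construction side, patching over $K[[V]][U]$ as in \cite{HHK11} produces an $M$-regular $G$-extension of $L_2$ whose completion at $\mathfrak{P}_u$ is $(\Zz/q\Zz)^2$: one $\Zz/q\Zz$ comes from ramification at $U$, and the other is an \emph{unramified} $\Zz/q\Zz$ coming from a genuine degree-$q$ extension of the residue field $M=K((V))$ (extracting a $q$-th root of an element whose reduction mod $U$ is essentially $V$). This second factor is exactly what the structure $K((V))$ buys you and what is missing from your two-point scheme; over a field with algebraically closed residue field the local Galois group at any closed point would be cyclic and your construction could not separate $E$ from the specialization set.
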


The extensions for which none of the statements of Theorem \ref{list} applies are of genus $0$ (by (1)). Recall that, if $F \cap \overline{k}=k$, the case $g=0$ can occur only in the next situations:

\noindent
$\bullet$ $G$ is cyclic and ${\bf{e}}=(|G|,|G|)$,

\noindent
$\bullet$ $G$ is dihedral and ${\bf{e}}=(2,2,|G|/2)$,

\noindent
$\bullet$ $G=A_4$ and ${\bf{e}}=(2,3,3)$,

\noindent
$\bullet$ $G=S_4$ and ${\bf{e}}=(2,3,4)$,

\noindent
$\bullet$ $G=A_5$ and ${\bf{e}} = (2,3,5)$.

\noindent
Hence, taking now (2) and (3) into account, we obtain that, if $F \cap \overline{k}=k$, the only cases for which none of the statements of Theorem \ref{list} applies are the following ones:

\noindent
{\rm{(a)}} $G$ is cyclic of even order, $r=2$, and ${\bf{t}} \subset \mathbb{P}^1(k)$,

\noindent
{\rm{(b)}} $G$ is cyclic of odd order and $r=2$,

\noindent
{\rm{(c)}} $G$ is dihedral of order $2n$ with $n \geq 3$ odd, $r=3$, and ${\bf{t}} \subset \mathbb{P}^1(k)$.

\begin{proposition} \label{list2}
Assume $F \cap \overline{k}=k$ and that {\rm{(a)}} or {\rm{(b)}} or {\rm{(c)}} holds. Then, for every field $L \supseteq k$ and every $E/L$ in ${\sf{R}}_{\leq G}(L)$, we have $E=(FL)_{t_0}$ for infinitely many $t_0 \in \mathbb{P}^1(L)$.
\end{proposition}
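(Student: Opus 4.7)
The plan is to handle the three cases (a), (b), (c) separately. In each, I would first reduce $F/k(T)$ to an explicit canonical model via a $k$-rational M\"obius transformation on $T$, and then verify the infinitely-many-specializations property by a direct Kummer-theoretic computation in (a) and (b), and by the Hashimoto--Miyake construction in (c).

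The reduction step proceeds as follows. In each case, the genus $0$ and branch data hypotheses, combined with $k$-regularity, determine $F/k(T)$ up to $k(T)$-isomorphism as one of the standard generic models underlying the equivalence (1)$\Leftrightarrow$(3) of Theorem \ref{prop:classification}. In case (a), Riemann--Hurwitz yields $g=0$ and the $k$-rational branch points force $F=k(S)$; after a $k$-M\"obius moving ${\bf t}$ to $\{0,\infty\}$ and rescaling $T$, the $k$-regularity together with the cyclic action of ${\rm Gal}(F/k(T))$ by $\zeta_n$ on $S$ forces $\zeta_n\in k$ and $F=k(T^{1/n})$. In case (b), either ${\bf t}\subset \mathbb{P}^1(k)$, landing in the same Kummer model (now with $n$ odd), or ${\bf t}$ is a Galois-conjugate pair over a quadratic extension of $k$, in which case $F$ is identified, via $\overline k/k$-descent and the classification of $1$-dimensional $k$-tori, with a twisted Kummer extension attached to the norm-one torus $R^1_{k(\zeta_n)/k}\mathbb{G}_m$. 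In case (c), after normalizing the three $k$-rational branch points, $F$ is the Hashimoto--Miyake dihedral model of type $(2,2,n)$ from \cite[Theorem 5.5.4]{JLY02}.

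In the verification step, for the split Kummer case covering (a) and part of (b), Kummer theory (using $\zeta_n\in k\subseteq L$) expresses any $E/L\in{\sf R}_{\leq G}(L)$ as $L(a^{1/m})$ for some $m\mid n$ and some $a\in L^*$ (allowing $m=a=1$ for the trivial extension); setting $t_0=a^{n/m}c^n$ for $c\in L^*$ then gives $(FL)_{t_0}=L(t_0^{1/n})=L(a^{1/m})=E$, and varying $c\in L^*$ produces infinitely many distinct $t_0\in L$ since $L$ is infinite in characteristic $0$ (and only finitely many branch points must be avoided). The dihedral case (c) follows by the analogous explicit computation on the Hashimoto--Miyake polynomial, which is known to be versal in a form that produces infinitely many specializations for each target subextension. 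The main obstacle will be the non-split sub-case of (b): there the twisted model demands an extra Galois-cohomological input, ensuring that every cyclic $L$-extension of degree dividing $n$ arises from a specialization of the twisted Kummer model. This amounts to surjectivity of the connecting map attached to the $n$-th power isogeny on the norm-one torus $R^1_{k(\zeta_n)/k}\mathbb{G}_m$, together with the fact that the group of $L$-points of this torus is infinite, which yields the infinitely-many clause.
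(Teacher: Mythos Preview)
Your approach differs substantially from the paper's. The paper does \emph{not} reduce to explicit canonical models. Instead it argues uniformly via the twisting lemma of \cite{Deb99a}: given $E/L\in{\sf R}_{\leq G}(L)$, the twisted cover $(FL)_E/L(T)$ has the same geometric data as $FL/L(T)$ (in particular genus $0$), and any unramified prime of residue degree $1$ in $(FL)_E/L(T)$ yields a $t_0$ with $(FL)_{t_0}=E$. One rational point on the genus-$0$ curve $(FL)_E$ then forces $(FL)_E\cong\mathbb{P}^1_L$, giving infinitely many such $t_0$. The single rational point is produced case by case: in (a) the totally ramified fibre over a $k$-rational branch point does it; in (b) one invokes the classical fact that a genus-$0$ curve carrying a map of odd degree to $\mathbb{P}^1$ has a rational point; in (c) one passes through the quadratic subfield to reduce to the odd-cyclic situation of (b). This avoids any model-by-model computation and in particular never needs Hashimoto--Miyake or tori.

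Your plan for (a) and for the split sub-case of (b) is fine, and the torus argument for the non-split sub-case of (b) can indeed be completed (the key point, which you leave implicit, is that $H^1(L,R^1_{k(\zeta_n)/k}\mathbb{G}_m)$ is $2$-torsion while $n$ is odd, so the connecting map is surjective). The real gap is case (c). You assert that the Hashimoto--Miyake polynomial ``is known to be versal in a form that produces infinitely many specializations for each target subextension''; this is exactly what needs to be proved, and it is not a consequence of genericity in the sense of Definition~\ref{def:intro}, which only guarantees a single specialization for extensions with group \emph{equal} to $G$. Passing to subgroups of $D_n$ (cyclic $\Zz/d\Zz$ and dihedral $D_d$ for $d\mid n$) and to infinitely many specialization points requires an independent argument; citing Kemper's equivalence would be circular here, since in this paper that equivalence for $n=1$ is derived \emph{from} Proposition~\ref{list2} (see Remark~\ref{rk:3.6}(3)). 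You would need to carry out the explicit computation for the dihedral model as you did for Kummer, including the subgroup cases, before this step is complete.
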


\subsection{Proof of Theorem \ref{list}{\rm{(1)}}} \label{ssec:proof1_2}

As $K$ is ample, ${\sf{R}}_G(L_1)$ contains infinitely many pairwise linearly disjoint $K$-regular extensions (see \cite[Main Theorem A]{Pop96}). If $F \cap \overline{k} \not=k$, at most one of these is in ${\sf{SP}}(FL_1/L_1(T))$. Hence, assume $F \cap \overline{k}=k$. Then, as $g \geq 1$ and $K$ is ample, \cite[Theorem 3.7(a-1)]{DKLN18} yields infinitely many $K$-regular extensions $E/L_1 \in {\sf{R}}_G(L_1)$ each of which satisfies $E \not=(FL_1)_{t_0}$ for any $t_0 \in L_1 \setminus K$. Pick such an $E/L_1$ and assume $E=(FL_1)_{t_0}$ for some $t_0 \in \mathbb{P}^1(L_1)$. Then $t_0 \in \mathbb{P}^1(K)$ and Lemma \ref{spec} gives $E=(FK)_{t_0} L_1$. As $E \cap \overline{K} = K$, we get $(FK)_{t_0}=K$ and so $E=L_1$, a contradiction.

\subsection{Proof of Theorem \ref{list}{\rm{(2)}}} \label{ssec:proof1_3}

Set $M=K((V))$ (so $L_2=M(U) = K((V))(U)$). For each $u \in M$, denote by $\frak{P}_{u}$ the prime ideal of $M[U]$ generated by $U-u$.

We will need the following two lemmas. The first one is a function field analog of \cite[Proposition 6.3]{KLN19} (which is stated over number fields):

\begin{lemma} \label{cyclic}
Assume $F \cap \overline{k}=k$. For each $t_0 \in \mathbb{P}^1(L_2)$ and all but finitely many $u \in M$ (not depending on $t_0$), the Galois group of the completion at $\frak{P}_u$ of $(FL_2)_{t_0}/ L_2$ is cyclic.
\end{lemma}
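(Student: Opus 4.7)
The plan is to exploit the very special structure of the local field $L_2^{\hat u} = M((U-u))$, where $M = K((V))$. Two features will be crucial: since $K$ is algebraically closed of characteristic $0$, the Puiseux description $\overline M = \bigcup_n K((V^{1/n}))$ shows that every finite extension of $M$ is cyclic; and $M$, hence $L_2^{\hat u}$, contains all roots of unity. I will reduce the problem, via the \'etale algebra perspective of \S\ref{ssec:basic_5}, to a single Kummer-type equation at each point in an associated fiber, where these two features force cyclicity.

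Concretely, let $X \to \mathbb{P}^1_k$ be the Galois cover corresponding to $F/k(T)$, with branch points $b_1,\ldots,b_r \in \overline k \subset K$, and let $\tilde L$ be the \'etale $L_2$-algebra given by the fiber of $X_{L_2} \to \mathbb{P}^1_{L_2}$ over $T=t_0$ (well defined once $t_0$ avoids the $b_i$). By \S\ref{ssec:basic_5}, the underlying field of $\tilde L$ is $(FL_2)_{t_0}$, so each completion $((FL_2)_{t_0})^{\hat{\mathfrak{Q}}}$ at a prime $\mathfrak{Q}\mid\mathfrak{P}_u$ appears, up to isomorphism, as a field factor of $\tilde L \otimes_{L_2} L_2^{\hat u}$; it therefore suffices to prove that every such factor is a cyclic extension of $L_2^{\hat u}$.

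I then split into two cases according to the reduction of $T=t_0$ modulo $(U-u)$. When $t_0(u)\notin\{b_1,\ldots,b_r\}$ (the unramified case), the reduction of $\tilde L$ mod $(U-u)$ is \'etale of degree $|G|$ over $M$, so Hensel's lemma decomposes $\tilde L \otimes L_2^{\hat u}$ into unramified extensions of $L_2^{\hat u}$, whose Galois groups coincide with those of finite residue extensions of $M$; these are cyclic by the Puiseux observation. When $t_0(u)=b_i$ (the ramified case), each of the $|G|/e_i$ preimages of $b_i$ is defined over $\overline k \subset M$, and the cover is \'etale-locally of the form $Y^{e_i}=T-b_i$. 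Writing $t_0-b_i = c(U-u)^m\epsilon$ with $c\in M^{\ast}$, $m\geq 1$, and $\epsilon$ a principal unit (so that $\epsilon^{1/e_i}\in M[[U-u]]$ by Hensel), the substitution $\tilde Y = Y/\epsilon^{1/e_i}$ yields the Kummer equation $\tilde Y^{e_i} = c(U-u)^m$; since $\mu_{e_i}\subset L_2^{\hat u}$, each of its irreducible factors cuts out a cyclic extension of $L_2^{\hat u}$. The case where $u$ is a pole of $t_0$ is handled symmetrically via the chart $T'=1/T$.

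The main obstacle I expect is pinning down a uniform exceptional set: the case analysis above already treats every $u\in M$ once $t_0$ is admissible, so candidates for the exceptional finite set in $M$ should come only from the branch data of $F/k(T)$ (e.g.\ to ensure that the \'etale-local Kummer description of the cover applies over $M[[U-u]]$ simultaneously at every $b_i$), and in particular can be chosen independently of $t_0$. Identifying $((FL_2)_{t_0})^{\hat{\mathfrak{Q}}}$ with a specific component of $\tilde L \otimes L_2^{\hat u}$, and using that $(FL_2)_{t_0}/L_2$ is Galois so that all completions above $\mathfrak{P}_u$ are isomorphic, should then finish the proof.
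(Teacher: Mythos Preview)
Your approach is viable and essentially correct, but it is genuinely different from the paper's proof. The paper does not perform any explicit local computation at the branch points. Instead, after disposing of the unramified case exactly as you do (via the Puiseux description of $\overline{M}$), it invokes two external results: the Specialization Inertia Theorem of \cite[\S2.2]{Leg16}, which asserts that if $(FL_2)_{t_0}/L_2$ ramifies at $\mathfrak{P}_u$ then $t_0$ meets some branch point $t$ modulo $\mathfrak{P}_u$, and \cite[Theorem 4.1]{KLN19}, which embeds the decomposition group at $\mathfrak{P}_u$ into the inertia group $I_t$ of $F\overline{k}/\overline{k}(T)$ at $t$. Both results come with a finite set of exceptional primes depending only on $FL_2/L_2(T)$, which is where the ``all but finitely many $u$'' clause originates. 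Cyclicity then follows because $I_t$ is cyclic in characteristic zero.

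What your route buys is self-containment: by exploiting that $K$ is algebraically closed, so that every preimage of each $b_i$ is $K$-rational and the cover is formally a pure Kummer cover $Y^{e_i}=T-b_i$ there, you bypass both cited theorems and in effect reprove the special case of \cite[Theorem 4.1]{KLN19} that is needed here. Your argument even appears to work for \emph{every} $u\in M$, so the exceptional set you were worried about is actually empty in your setup; the finitely many exceptions in the paper's statement are an artefact of quoting the general theorems. Two small points to tidy up: the case $t_0\in\{b_1,\dots,b_r\}$, which you exclude when forming $\tilde L$, should be noted separately (there $(FL_2)_{t_0}=L_2$ by Lemma~\ref{spec}, since $b_i\in\overline{k}$); and ``\'etale-locally'' is better phrased as ``after completion at each preimage of $b_i$'', which is what your substitution $t_0-b_i=c(U-u)^m\epsilon$ actually uses. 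What the paper's route buys is brevity and portability: the same argument runs verbatim over number fields (as in \cite[Proposition 6.3]{KLN19}), where your hands-on Kummer picture is unavailable.
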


\begin{proof}
The proof is similar to that in the number field case, and relies on \cite[Theorem 4.1]{KLN19} (the main result of that paper). For the convenience of the reader, we offer a proof, with the needed adjustments. Let $u \in M$ and $t_0 \in \mathbb{P}^1(L_2)$. If $({F}L_2)_{t_0}/L_2$ is unrami\-fied at $\frak{P}_{u}$, then the Galois group of its completion at $\frak{P}_{u}$ is cyclic (as $M = K((V))$ with $K$ algebraically closed of characteristic 0). We may then suppose $({F}L_2)_{t_0}/L_2$ is ramified at $\frak{P}_{u}$. In particular, $t_0 \not \in {\bf{t}}$. Indeed, if $t_0 \in {\bf{t}}$, then $t_0 \in \mathbb{P}^1(\overline{k})$. By Lemma \ref{spec}, we would have $({F}L_2)_{t_0} = ({F}\overline{k})_{t_0} L_2=L_2,$ which cannot happen as $({F}L_2)_{t_0}/L_2$ ramifies at $\frak{P}_{u}$. Up to dropping finitely many values of $u$ (depending only on ${F}L_2/L_2(T)$), we may use the Specialization Inertia Theorem of \cite[\S2.2]{Leg16} to get that $t_0$ meets some branch point of ${F}/k(T)$, say $t$, modulo $\frak{P}_{u}$ (see \cite[Definition 2.2]{Leg16}). As above, $({F}L_2)_{t} = L_2$. Let $I_t$ be the inertia group of ${F}k(t)/k(t)(T)$ at $\langle T-t \rangle$. Up to dropping finitely many values of $u$ (depending only on ${F}L_2/L_2(T)$), \cite[Theorem 4.1]{KLN19} yields that the Galois group of the completion at $\frak{P}_{u}$ of $({F}L_2)_{t_0}/L_2$ embeds into $I_{t}$. As $I_{t}$ is cyclic, we are done.
\end{proof}

\begin{lemma}\label{lem:patching}
For every $u \in M$, there exists an $M$-regular extension $E/L_2\in {\sf{R}}_G(L_2)$ whose completion at $\mathfrak{P}_u$ has a non-cyclic abelian Galois group.
\end{lemma}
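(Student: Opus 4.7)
The plan is to construct, for the prescribed $u$, a non-cyclic abelian Galois field extension of the completion $(L_2)_{\mathfrak{P}_u}$, and then embed it, via patching (\cite{HHK11}), into a global $M$-regular $G$-Galois extension of $L_2$ as the decomposition datum at $\mathfrak{P}_u$.

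First I would locate a subgroup $A \leq G$ isomorphic to $(\Zz/p\Zz)^2$ for some prime $p$; this exists because any non-cyclic finite abelian group contains such a subgroup (take a non-cyclic Sylow subgroup and apply the classification). The completion is $(L_2)_{\mathfrak{P}_u} = K((V))((U-u))$; the two-dimensional regular local ring $K[[V, U-u]]$ sitting inside it has two distinct height-one primes $(V)$ and $(U-u)$, so $V$ and $U-u$ are independent in the Kummer group $(L_2)_{\mathfrak{P}_u}^\times / ((L_2)_{\mathfrak{P}_u}^\times)^p$. Since $K$ is algebraically closed of characteristic $0$, Kummer theory then provides the desired $A$-Galois field extension
\[
F_u := (L_2)_{\mathfrak{P}_u}\bigl(V^{1/p},\, (U-u)^{1/p}\bigr),
\]
which is non-cyclic abelian by construction.

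Next I would apply the formal patching framework of \cite{HHK11} on the regular arithmetic surface $\Pp^1_{K[[V]]}$, whose function field is $L_2$. I would prescribe the patch above the closed point $(V=0,\, U=u)$ of the special fiber to be the induced $G$-Galois \'etale algebra $\mathrm{Ind}_A^G(F_u)$ (recalled in \S\ref{ssec:basic_5}), and I would prescribe further patches at other $M$-rational points of the special fiber so that: \textbf{(a)} the data are compatible on overlaps; \textbf{(b)} the resulting global $G$-Galois \'etale algebra is connected and hence a field $E$; \textbf{(c)} $E/L_2$ is $M$-regular. For \textbf{(b)}, I would add auxiliary patches whose monodromy, together with $A$, generates $G$, a standard device in HHK-type constructions; for \textbf{(c)}, I would choose those auxiliary patches to be $M$-regular, which is possible because $M$ is ample. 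By construction, $E \otimes_{L_2} (L_2)_{\mathfrak{P}_u} = \mathrm{Ind}_A^G(F_u)$, so the decomposition group at any prime of $E$ above $\mathfrak{P}_u$ equals $A$ and the corresponding completion equals $F_u$, which is the required non-cyclic abelian extension.

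The main obstacle is coordinating the patching data: the raw output of patching is a $G$-Galois \'etale algebra that need not be connected, and one must introduce additional local patches to glue the components into a single field $E$ while simultaneously preserving the prescribed structure at $\mathfrak{P}_u$ and keeping $E$ $M$-regular. This is the classical hurdle in HHK-type patching constructions; it is resolvable here because $M = K((V))$ is ample, so one may add $G$-Galois patches at auxiliary $M$-rational points with prescribed $M$-regular monodromy, enabling one to force connectivity of the global algebra without disturbing the local datum at $\mathfrak{P}_u$ or the regularity over $M$.
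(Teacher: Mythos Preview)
Your overall strategy—HHK patching on $\Pp^1_{K[[V]]}$ with a prescribed $(\Zz/p\Zz)^2$ datum at the closed point corresponding to $u$—is exactly the paper's route. However, your sketch elides the two steps that carry essentially all of the technical content of the argument.

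The first is the choice of Kummer generators. In the HHK framework the patch sits over $F_0=\mathrm{Frac}\,K[[V,U-u]]$ and must agree, after base change to the branch field $F_{\wp(0)}=K((U-u))((V))$, with the datum over the open complement $O$; the paper takes the \emph{split} $G$-algebra on $O$ and on every branch, which forces each local field extension to trivialize over $F_{\wp(0)}$. Your generators $V$ and $U-u$ do \emph{not} become $p$-th powers there (the first is a uniformizer, the second is a non-$p$-th-power in the residue field $K((U-u))$), so your patch does not split on the branch and you supply no alternative construction of $E_O$. The paper's $a_0=U/(U-V)$ and $b_0=(U-V^2)/(U-V-V^2)$ are chosen precisely so that $a_0,b_0\equiv 1\pmod V$, hence are $p$-th powers in $F_{\wp(0)}$ by Hensel; this is the crux of the compatibility step. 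Relatedly, $V^{1/p}\in\oline M$, so your patch $F_0(V^{1/p},(U-u)^{1/p})/F_0$ is not $M$-regular, whereas the paper's $M$-regularity argument for the global $E$ uses exactly that each local patch is $M$-regular. The second gap is connectedness: after patching, the stabilizer $G'\leq G$ of the underlying field of $E$ is only known to contain a \emph{conjugate} of each local Galois group, and ``the local groups together generate $G$'' does not force $G'=G$ (e.g.\ two conjugate involutions generate $S_3$, yet a single order-$2$ subgroup already contains conjugates of both). The paper instead inserts a patch for \emph{every} cyclic subgroup $c\leq G$ and invokes Jordan's theorem; the same device, applied to the normal subgroup $\oline G$ fixing $E\cap\oline M$, yields $M$-regularity. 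Ampleness of $M$ is not what does the work here.
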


\begin{proof}
By a linear change of the variable $U$, we may without loss of generality assume that $\frak P_u = \langle U \rangle$. To prove the lemma, we follow the construction and patching methods of \cite[\S 4]{HHK11}, and adjust these to our setup. Let $X=\mathbb P^1_K$ be the closed fibre of $\hat X=\mathbb P^1_{K[[V]]}$. Our prime $\frak P_u$ corresponds to a maximal ideal $\langle V,U \rangle \lhd K[[V]][U]$ whose image in $K[U]$ is $\langle U \rangle $, the prime corresponding to the point $0 \in \mathbb P^1_K$ of the closed fibre.  

Let $\mathcal C$ be the set of cyclic subgroups of $G$, and $S\subseteq \mathbb A^1_K\subseteq X$ a finite set of points\footnote{Note that, although in \cite{HHK11} the set $S$ is chosen specifically, \cite[Proposition 3.4]{HHK13} shows that the set $S$ can be chosen to be an arbitrary finite set.} consisting of $u_0 = 0 \in \mathbb A^1_K$ and distinct non-zero points $u_c \in \mathbb A^1_K$, $c \in \mathcal C$. For $u_c \in S$, let $F_c=K((V,U-u_c))$ denote the fraction field of the complete local ring $\hat R_c = K[[V,U-u_c]]$ at $u_c$. For the (open) complement $O = X\setminus S$, consider the $V$-adic completion of the subring of  functions on $\hat X$ that are regular on $O$, and let $F_O$ be its fraction field. For $u_c\in S$, also consider the localization of $\hat R_c$ at the prime $\langle V \rangle$ (corresponding to a branch in \cite{HHK11}), and let $F_{\wp(c)}=K((U-u_c))((V))$ be the fraction field of its $V$-adic completion. 

Consider the inverse system $I$ whose objects are the above fields $F_O$ and $F_c$, $F_{\wp(c)}$ for $u_c\in S$, and whose morphisms are the natural inclusions $F_c \subseteq F_{\wp(c)}$ and $F_O \subseteq F_{\wp(c)}$ for $u_c\in S$. A collection of $G$-Galois extensions $E_{\xi}/F_\xi$, $\xi \in I$ of \'etale algebras, together with isomorphisms $E_{O} \otimes_{F_O} F_{\wp(c)} \ra E_{\wp(c)}$, $E_{c} \otimes_{F_c} F_{\wp(c)} \ra E_{\wp(c)}$ for all $u_c\in S$, is called a {\it patching data} (of $G$-Galois \'etale algebras). 

We construct the fields $E_{\xi}$, $\xi\in I$ following the proof of \cite[Proposition 4.4]{HHK11}. Let $E_O=F_O^{|G|}$ and $E_{\wp(c)} = F_{\wp(c)}^{|G|}$, $u_c\in S$ be  split $G$-Galois extensions. It now remains to define the $G$-Galois extensions $E_c/F_c$ whose underlying field $E_c'$ is contained in $F_{\wp(c)}$, and hence $E_c\otimes_{F_c} F_{\wp(c)}\cong F_{\wp(c)}^{|G|} = E_{\wp(c)}$, giving the desired isomorphisms. 

Put $f_c=U-u_c\in \hat R_c$ and set
$$a_c = \frac{f_c}{f_c-V} \text{ for }u_c\in S, \text{ and } b_0=\frac{U-V^2}{U-V-V^2}.$$
For $c \in \mathcal C$, let $E_c'=F_c(\sqrt[m]{a_c})$ for $m = |c|$. As $a_c$ is not a $d$-th power for any $d>1$, Kummer theory yields ${\rm{Gal}}(E'_c/F_c) = c$. Moreover, since $E_{\wp(c)}$ is complete with respect to $\langle V \rangle$, the element $a_c$ is an $m$-th power in $F_{\wp(c)}$ by Hensel's lemma, and hence $E_c'\subseteq F_{\wp(c)}$. Then let $E_{c}/F_{c}$ be the $G$-Galois extension induced from the $c$-Galois field extension $E'_c/F_c$.

Finally, let $H \cong \Zz/q\Zz \times \Zz/q\Zz$ be a non-cyclic abelian subgroup of $G$ for a prime $q$, and $E'_{0} = F_0(\sqrt[q]{a_0},\sqrt[q]{b_0})$. Using Kummer theory as before, $F_0(\sqrt[q]{a_0})/F_0$ and $F_0(\sqrt[q]{b_0})/F_0$) both have Galois group $ \Zz/q\Zz$. The extension $F_0(\sqrt[q]{a_0})/F_0$ is ramified at the prime $\langle U \rangle \lhd \hat R_0$ while $F_0(\sqrt[q]{b_0})/F_0$ is unramified there. Hence, $F_0(\sqrt[q]{a_0})$ and $F_0(\sqrt[q]{b_0})$ are linearly disjoint over $F_0$, and $E'_0/F_0$ is a Galois extension of fields with Galois group $H$. Applying Hensel's lemma as before, one has $E'_0\subseteq F_{\wp(0)}$. Then let $E_0/F_{0}$ be the extension induced from $E'_0/F_0$. As $E'_c\subseteq F_{\wp(c)}$, $u_c\in S$, we conclude that $E_\xi$, $\xi\in I$ is a patching data. 

By \cite[Theorem 4.1]{HHK11}, there exists a $G$-Galois extension  $E/F$ of \'etale algebras such that $E\otimes_{L_2} F_c$ is isomorphic to $E_{c}$ for every $u_c\in S$. Moreover, identifying the two via this isomorphism, $E_c$ is generated by $E$ and $F_c$, and the subalgebra $E'\otimes_{L_2}F_c\subseteq E_c$, generated by $F_c$ and the underlying field $E'$ of $E$, contains a conjugate copy of $E'_c$, for every $u_c\in S$. We claim that $E/L_2$ is an $M$-regular field extension. Put $G'=\Gal(E'/L_2)$. For every $c\in\mathcal C$, the subgroup $c$ of $G$ is the stabilizer of $E'_{c}$ under the action of $G$. As $E'\otimes_{L_2}F_{c}$ contains a conjugate copy of $E'_{c}$ and is stabilized by $G'$, we get that $G'$ contains a conjugate of $c$. As $G'$ contains a conjugate of every cyclic subgroup $c$ of $G$, Jordan's theorem (see \cite{Jor72}) implies that $G'=G$, and hence $E'=E$ is a field. To show that $E$ is $M$-regular, let $\oline G \trianglelefteq G$ be the subgroup fixing the constant field $E\cap \oline M$. As $E_{c}'$ is a compositum of $E'$ and $F_{c}$ by Remark \ref{rem:comp} and $E_c'/F_c$ is $M$-regular, we get that $\oline G$ contains a conjugate of $c$. As $\oline G$ is normal, $c\leq \oline G$ for every $c\in\mathcal C$, and hence $\oline G=G$, as claimed.

Finally, since the completion of $E$ at $\frak P_u$ is the field underlying $E\otimes_{L_2} M((U))$, and since the field $E'_{0}$ is a compositum of $E$ and $F_{0}$, the completion of $E$ at $\langle U \rangle$ is isomorphic to the underlying field $M((U))(\sqrt[q]{a_0},\sqrt[q]{b_0})$ of $E'_0\otimes_{F_0} M((U))$. Now, $M((U))(\sqrt[q]{a_0})/M((U))$ has Galois group $\Zz/q\Zz$ and is totally ramified. On the other hand, $M((U))(\sqrt[q]{b_0})/M((U))$ is unramified and hence linearly disjoint from $M((U))(\sqrt[q]{a_0})/M((U))$. We claim that ${\rm{Gal}}(M((U))(\sqrt[q]{b_0})/M((U))) = \Zz/q\Zz$. Given the claim, the above shows that the extension $M((U))(\sqrt[q]{a_0},\sqrt[q]{b_0})/M((U))$ is of group $H$, which is non-cyclic abelian, as desired.

To prove the claim, observe that $b_0\equiv V/(V+1)$ modulo $U$, and put $\oline{b_0}=V/(V+1)$. As $M(\sqrt[q]{\oline{b_0}})/M$ is totally ramified at $\langle V \rangle$, we deduce that ${\rm{Gal}}(M(\sqrt[q]{\oline{b_0}})/M) = \Zz/q\Zz$. Hensel's lemma then implies that $M((U))(\sqrt[q]{b_0})/M((U))$ also has Galois group $\Zz/q\Zz$. 
\end{proof}

\begin{proof}[Proof of Theorem \ref{list}{\rm{(2)}}]
If $F \cap \overline{k} \not=k$, then ${\rm{Gal}}(FL_2/L_2(T)) \not=G$. Hence, ${\sf{R}}_{G}(L_2) \cap {\sf{SP}}(FL_2/L_2(T)) = \emptyset$. But, as $K$ is algebraically closed of characteristic 0, Riemann's existence theorem yields ${\sf{R}}_G(K(U)) \not=\emptyset$ and so ${\sf{R}}_G(L_2)$ contains infinitely many $M$-regular extensions. Hence, we may assume $F \cap \overline{k} =k$. Lemmas \ref{cyclic} and \ref{lem:patching} then yield that ${\sf{R}}_G(L_2) \setminus {\sf{SP}}(FL_2/L_2(T))$ contains infinitely many $M$-regular extensions, as needed.
\end{proof}

\subsection{Proof of Theorem \ref{list}{\rm{(3)}}} \label{ssec:proof1_4}

First, recall that, if a finite group $G$ is a regular Galois group over an infinite field $k$, i.e., if there is a $k$-regular Galois extension of $k(U)$ of group $G$, then applying suitable M\"obius transformations on $U$ leads to infinitely many $k$-regular Galois extensions of $k(U)$ of group $G$ such that the branch point sets of any two such extensions are disjoint. The Riemann--Hurwitz formula then shows that these $k$-regular Galois extensions of $k(U)$ are pairwise linearly disjoint. In the present situation, $G$ is cyclic or dihedral of order $2n$ with $n \geq3$ odd, and $k$ is of characteristic 0. Since abelian groups and dihedral groups are regular Galois groups over all fields, we get that ${\sf{R}}_G(L_3)$ contains infinitely many pairwise linearly disjoint $k$-regular extensions. If $F/k(T)$ is not $k$-regular, at most one of these can be in ${\sf{SP}}(F(U)/L_3(T))$. Hence, assume $F \cap \overline{k}=k$.

Now, assume $G=\Zz/n\Zz$ for some even $n \geq 2$ and $r=2$. As $n$ is even, there is an $L_3$-regular Galois extension of $L_3(T)$ of group $G$ with a branch point in $\mathbb{P}^1(L_3)$, and with another branch point of ramification index $n$. Then, by \cite[Corollary 3.4]{Leg16}, there is a prime $\frak{Q}$ of $k[U]$ such that, for all but finitely many $u$ in $k$, there is $E_u/L_3 \in {\sf{R}}_G(L_3)$ which ramifies at $\frak{P}_u=\langle U-u \rangle$, and whose ramification index at $\frak{Q}$ is $n$. In particular, $E_u/L_3$ is $k$-regular (by the last condition). Suppose $E_u/L_3 \in {\sf{SP}}(F(U)/L_3(T))$ for infinitely many $u \in k$. Without loss, we may assume $\infty \not \in {\bf{t}}$. For $i \in \{1,2\}$, denote the minimal polynomial of $t_i$ over $k$ by $m_i(T)$. Then, by \cite[Corollary 2.12 and Remark 3.11]{Leg16}, the reduction modulo $\frak{P}_u$ of $m_1(T) m_2(T) \in k[U][T]$ has a root in the residue field $k[U] /\frak{P}_u$ for some $u \in k$. As this residue field is $k$, $m_1(T)m_2(T)$ has a root in $k$. Hence, by the Branch Cycle Lemma (see \cite{Fri77} and \cite[Lemma 2.8]{Vol96}), $t_1$ and $t_2$ are in $\mathbb{P}^1(k)$.

Finally, assume $G$ is dihedral of order $2n$ for some odd $n \geq 3$ and $r=3$. As $n$ is odd, the ramification indices $e_1$, $e_2$, and $e_3$ are 2, 2, and $n$, respectively (up to reordering). In particular, by the Branch Cycle Lemma (and as $n \not=2$), $t_3$ is in $\mathbb{P}^1(k)$. By \cite[\S16.2 and Proposition 16.4.4]{FJ08}, every $k$-regular extension in ${\sf{R}}_{\Zz/2\Zz}(L_3)$ embeds into a $k$-regular extension in ${\sf{R}}_{G}(L_3)$. Hence, if all but finitely many $k$-regular extensions in ${\sf{R}}_{G}(L_3)$ are in ${\sf{SP}}(F(U)/L_3(T))$, then, as $G$ has a unique subgroup of index 2, all but finitely many $k$-regular extensions in ${\sf{R}}_{\Zz/2\Zz}(L_3)$ are specializations of the quadratic subextension of $F(U)/L_3(T)$. As the latter has only two branch points (namely, $t_1$ and $t_2$), a similar argument as in the cyclic case yields that these branch points have to be in $\mathbb{P}^1(k)$.

\subsection{Proof of Proposition \ref{list2}} \label{ssec:proof2}

Assume $F \cap \overline{k}=k$ and one of the following holds:

\noindent
{\rm{(a)}} $G$ is cyclic of even order, $r=2$, and ${\bf{t}} \subset \mathbb{P}^1(k)$,

\noindent
{\rm{(b)}} $G$ is cyclic of odd order and $r=2$,

\noindent
{\rm{(c)}} $G$ is dihedral of order $2n$ with $n \geq 3$ odd, $r=3$, and ${\bf{t}} \subset \mathbb{P}^1(k)$.

\noindent
Let $L \supseteq k$ and $E/L \in {\sf{R}}_{\leq G}(L)$. By the twisting lemma (see \cite{Deb99a}), there is a $k$-regular extension $(FL)_E/L(T)$ such that $(FL)_E \overline{L}=F\overline{L}$ and such that, given $t_0 \in \mathbb{P}^1(L) \setminus {\bf{t}}$, if there is a prime ideal lying over $\langle T-t_0 \rangle$ in $(FL)_E/L(T)$ with residue degree 1, then $E/L = (FL)_{t_0}/L$. In each case, the genus of $F$ is $0$ (if (c) holds, this follows from ${\bf{e}}$ being $(2,2,|G|/2)$). Hence, $(FL)_E$ has genus 0 as well. It then suffices to find $t\in \mathbb{P}^1(L)$ for which there is a prime ideal lying over $\langle T-t \rangle $ in $(FL)_E/L(T)$ with residue degree 1.

If (a) holds, then the unique prime ideal lying over $\langle T-t_1 \rangle$ in $(FL)_E/L(T)$ has residue degree 1. If (b) holds, the desired conclusion follows from $G$ being of odd order and the genus being 0; see, e.g., end of Page 1 of \cite{Ser92}. Finally, assume (c) holds. As already seen, the ramification indices $e_1$, $e_2$, and $e_3$ are 2, 2, and $n$, where $|G|=2n$, respectively (up to reordering). As $\{t_1, t_2\} \subset \mathbb{P}^1(k)$, we may assume that the quadratic subfield of $F$ is $k(\sqrt{T})$ (up to applying a suitable change of variable). Hence, there is $d \in L \setminus \{0\}$ such that $(FL)_E$ contains $L(\sqrt{dT})$. Set $Y=\sqrt{dT}$. The extension $(FL)_E/L(Y)$ is of degree $n$ and it has only two branch points; it is then Galois of group $\Zz/n\Zz$ and of genus 0. As $n$ is odd, there is $y_0 \in L$ such that the specialization of $(FL)_E/L(Y)$ at $y_0$ is $L/L$. Hence, there is a prime ideal lying over $\langle T-(y_0)^2/d \rangle $ in $(FL)_E/L(T)$ with residue degree 1.

\subsection{Proofs of Theorem \ref{thm:intro_2} and Corollary \ref{cor:list}} \label{ssec:proof_0}

We conclude this section by explai\-ning how Theorem \ref{thm:intro_2} and Corollary \ref{cor:list} follow from Theorem \ref{list} and Proposition \ref{list2}.

We start with the following consequence, of which Conclusion (1) is Theorem \ref{thm:intro_2} and Conclusion (2) is mentioned in the abstract:

\begin{corollary} \label{thm:intro_2bis}
Let $k$ be of characteristic 0, let $U, V$ be indeterminates, and let $P(T,Y) \in k[T][Y]$ be a monic separable polynomial of group $G$ and splitting field $F$ over $k(T)$.

\vspace{0.5mm}

\noindent
{\rm{(1)}} Assume $P(T,Y)$ is not generic. Then either $P(T,Y)$ is not $k(U)$-parametric or $P(T,Y)$ is not ${K}((V))(U)$-parametric for any algebraically closed overfield $K \supseteq k$.

\vspace{0.5mm}

\noindent
{\rm{(2)}} Assume $P(T,Y)$ is not generic and $k$ is algebraically closed. Then $P(T,Y)$ is not ${K}((V))(U)$-parametric for any algebraically closed overfield $K \supseteq k$.

\vspace{0.5mm}

\noindent
{\rm{(3)}} Assume $G$ is neither cyclic nor dihedral of order $2n$ with $n \geq 3$ odd. Then $P(T,Y)$ is not $K((V))(U)$-parametric for any algebraically closed overfield $K \supseteq k$.

\vspace{0.5mm}

\noindent
{\rm{(4)}} If $G \not \subset {\rm{PGL}}_2(\Cc)$, then $P(T,Y)$ is $K(U)$-parametric for no ample overfield $K \supseteq k$.
\end{corollary}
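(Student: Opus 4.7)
My strategy is to deduce all four statements from the combined force of Theorem \ref{list}, Proposition \ref{list2}, and the polynomial-to-extension dictionary in Lemma \ref{prop:compare_1}. Throughout, write $F/k(T)$ for the splitting field of $P$, so that $\Gal(F/k(T))=G$, set $g$ for the genus of $F$, and let $G_0=\Gal(F\overline{k}/\overline{k}(T))\trianglelefteq G$. I handle the case $F\cap \overline{k}\neq k$ uniformly for all four parts: then $G_0\subsetneq G$, so every specialization of $FL/L(T)$ has Galois group contained in $G_0$, and when $L=K(U)$ or $L=K((V))(U)$ with $K$ ample, Pop's theorem \cite{Pop96} produces infinitely many $K$-regular extensions in ${\sf R}_G(L)\setminus {\sf SP}(FL/L(T))$, yielding non-parametricity of $P$ via Lemma \ref{prop:compare_1}(1). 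I henceforth assume $F\cap \overline{k}=k$, so $G=G_0$.

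For (4), the hypothesis $G\not\subset {\rm PGL}_2(\Cc)$ forces $g\geq 1$: a genus-$0$ $k$-regular Galois extension of $k(T)$ presents $G$ as a finite subgroup of ${\rm PGL}_2(\overline{k})$, which must be cyclic, dihedral, $A_4$, $S_4$, or $A_5$. Theorem \ref{list}(1) then supplies the desired obstruction for any ample overfield $K\supseteq k$. Part (3) splits according to whether $G$ admits a non-cyclic abelian subgroup: if so, Theorem \ref{list}(2) applies directly to $L_2=K((V))(U)$; otherwise, the hypothesis that $G$ is neither cyclic nor dihedral of order $2n$ with $n\geq 3$ odd rules out every finite subgroup of ${\rm PGL}_2(\Cc)$ whose abelian subgroups are all cyclic, so again $g\geq 1$, and I invoke Theorem \ref{list}(1) with the ample overfield $K((V))$ (a complete valued field, hence ample).

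For (1), I prove the contrapositive: assume $P$ is both $k(U)$-parametric and $K((V))(U)$-parametric for some algebraically closed $K\supseteq k$, and conclude $P$ is generic. The $K((V))(U)$-parametricity, combined with Theorem \ref{list}(1) applied with the ample field $K((V))$, forces $g=0$. By the genus-$0$ list recalled immediately after the statement of Theorem \ref{list}, $G$ is cyclic, dihedral, $A_4$, $S_4$, or $A_5$. Theorem \ref{list}(2) then eliminates those harbouring a non-cyclic abelian subgroup, leaving $G$ cyclic or dihedral of order $2n$ with $n$ odd, and pinning down $r$ and the ramification type. The $k(U)$-parametricity hypothesis, together with Theorem \ref{list}(3)(a)--(b), forces ${\bf t}\subset \Pp^1(k)$, so we fall within one of the three scenarios (a), (b), (c) of Proposition \ref{list2}. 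That proposition asserts $E=(FL)_{t_0}$ for infinitely many $t_0$, for every $E/L\in{\sf R}_{\leq G}(L)$ and every $L\supseteq k$; Lemma \ref{prop:compare_1}(2) then promotes this to $P$ being $L$-parametric for every $L$, i.e., generic. Part (2) is the specialization of this argument to $k=\overline{k}$: when $k$ is algebraically closed, the hypothesis ${\bf t}\not\subset \Pp^1(k)$ required for Theorem \ref{list}(3) can never hold, so the $k(U)$-parametricity premise becomes superfluous and the $K((V))(U)$-parametricity alone forces $P$ to be generic.

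The principal difficulty is bookkeeping rather than mathematical depth: one must carefully manage the translation between $P$ and $F/k(T)$ via Lemma \ref{prop:compare_1} in both directions, and verify, by matching branch-cycle data against the explicit genus-$0$ list, that each case surviving the elimination of non-cyclic abelian subgroups fits hypothesis (a), (b), or (c) of Proposition \ref{list2} exactly. Once these verifications are in place, the proofs of all four parts collapse to short citations of Theorem \ref{list}, Proposition \ref{list2}, and the compatibility lemmas.
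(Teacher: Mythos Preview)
Your proof is correct and follows essentially the same approach as the paper: reduce to the $k$-regular case, then use Theorem \ref{list}(1)--(3) to eliminate all possibilities except those covered by Proposition \ref{list2}, and close with Lemma \ref{prop:compare_1}(2).

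One small imprecision: in your uniform handling of the case $F\cap\overline{k}\neq k$, the claim ``every specialization of $FL/L(T)$ has Galois group contained in $G_0$'' is only justified when $\overline{k}\subseteq L$, which holds in parts (1)--(3) since $K$ is algebraically closed there, but not in part (4) where $K$ is merely ample. For (4) the paper instead argues that, since $K$ is ample, ${\sf R}_G(K(U))$ contains infinitely many pairwise linearly disjoint $K$-regular extensions, and if $F\cap\overline{k}\neq k$ then every specialization of $FL_1/L_1(T)$ contains the fixed constant extension $(F\cap\overline{k})L_1/L_1$, so at most one of those linearly disjoint extensions can lie in ${\sf SP}(FL_1/L_1(T))$. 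This patches your argument without changing its structure.
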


\begin{proof}
(1) Assume $P(T,Y)$ is $k(U)$-parametric and $K((V))(U)$-parametric for some algebraically closed overfield $K \supseteq k$. Then, by Lemma \ref{prop:compare_1}(1), the same holds for $F/k(T)$. As in the proof of Theorem \ref{list}(2) (see the end of \S\ref{ssec:proof1_3}), $F/k(T)$ being $K((V))(U)$-parametric implies that $F/k(T)$ is $k$-regular. Moreover, by Theorem \ref{list}, one of the three conditions stated before Proposition \ref{list2} holds. Hence, by that proposition, we have that, for every overfield $L \supseteq k$ and every $E/L \in {\sf{R}}_{G}(L)$, there exist infinitely many $t_0 \in \mathbb{P}^1(L)$ such that $E=(FL)_{t_0}$. It then remains to use Lemma \ref{prop:compare_1}(2) to conclude that $P(T,Y)$ is generic.

\vspace{0.75mm}

\noindent
(2) The proof is similar to that of (1), except that we have to use that neither Condition (a) nor Condition (b) of Theorem \ref{list}(3) can happen (since $k$ is algebraically closed).

\vspace{0.75mm}

\noindent
(3) If $P(T,Y)$ is $K((V))(U)$-parametric for some algebraically closed overfield $K \supseteq k$, then, as in (1), $F/k(T)$ is $K((V))(U)$-parametric and $k$-regular. Moreover, Theorem \ref{list}(1) and Theorem \ref{list}(2) yield that we are in the situation of Theorem \ref{list}(3) or in that of Proposition \ref{list2}. In both cases, $G$ is cyclic or dihedral of order $2n$ with $n \geq 3$ odd.

\vspace{0.75mm}

\noindent
(4) Assume $P(T,Y)$ is $K(U)$-parametric for some ample overfield $K \supseteq k$. As in the proof of (1), $F/k(T)$ is $K(U)$-parametric. As already recalled in \S\ref{ssec:proof1_2}, ${\sf{R}}_G(K(U))$ contains infinitely many pairwise linearly disjoint extensions. Hence, ${F}/k(T)$ is $k$-regular. Finally, Theorem \ref{list}(1) gives that $F/k(T)$ is of genus 0, and so $G \subset {\rm{PGL}}_2(\Cc)$.
\end{proof}

We finally get to the classification of all the one parameter generic polynomial/extensions over a given field of characteristic zero:

\begin{corollary} \label{list ext}
Let $k$ be of characteristic 0 and $P(T,Y) \in k[T][Y]$ a monic separable polynomial of group $G$ and splitting field $F$ over $k(T)$. Denote the branch point number (resp., branch point set) of $F/k(T)$ by $r$ (resp., by ${\bf{t}}$). The following three conditions are equivalent:

\vspace{0.5mm}

\noindent
{\rm{(1)}} $F/k(T)$ is generic,

\vspace{0.5mm}

\noindent
{\rm{(2)}} $P(T,Y)$ is generic,

\vspace{0.5mm}

\noindent
{\rm{(3)}} $F \cap \overline{k}=k$ and one of the following three conditions holds:

\vspace{0.25mm}

{\rm{(a)}} $G$ is cyclic of even order $n$ such that $e^{2i\pi/n} \in k$, $r=2$, and ${\bf{t}} \subset \mathbb{P}^1(k)$,

\vspace{0.25mm}

{\rm{(b)}} $G$ is cyclic of odd order $n$ such that $e^{2i \pi/n} + e^{-2i \pi/n} \in k$ and $r=2$,

\vspace{0.25mm}

{\rm{(c)}} {\hbox{$G$ is dihedral of order $2n$ with $n \geq 3$ odd and $e^{2i \pi/n} + e^{-2i \pi/n} \in k$, $r=3$, and ${\bf{t}} \subset \mathbb{P}^1(k)$.}}
\end{corollary}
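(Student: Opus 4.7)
The plan is to close the cycle $(2) \Rightarrow (1) \Rightarrow (3) \Rightarrow (2)$. For $(2) \Rightarrow (1)$, one applies Lemma \ref{prop:compare_1}(1): if $P$ is $L$-parametric, then every $E \in {\sf R}_G(L)$ is the splitting field of some $P(t_0, Y)$ and hence equals $(FL)_{t_0}$, so $F$ is $L$-parametric. For $(3) \Rightarrow (2)$, observe that the three cases of $(3)$ subsume the hypotheses of Proposition \ref{list2} verbatim (the extra root of unity conditions are not used there). Proposition \ref{list2} then yields, for every overfield $L \supseteq k$ and every $E \in {\sf R}_{\leq G}(L)$, infinitely many $t_0 \in \Pp^1(L)$ with $E = (FL)_{t_0}$. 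Restricting to $E \in {\sf R}_G(L)$ and applying Lemma \ref{prop:compare_1}(2), $E$ is the splitting field of $P(t_0, Y)$ for infinitely many $t_0 \in L$; hence $P$ is $L$-parametric for every $L$, i.e., $P$ is generic.

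The substantive direction is $(1) \Rightarrow (3)$, which I would assemble from the results of \S\ref{ssec:main}. First, $F \cap \oline{k} = k$: otherwise, over $L = \oline{k}((V))(U)$ the group $\Gal(FL/L(T))$ is a proper subgroup of $G$ while Riemann's existence theorem supplies ${\sf R}_G(L) \neq \emptyset$, contradicting $L$-parametricity (this is exactly the opening argument in the proof of Theorem \ref{list}(2)). Next, Theorem \ref{list}(1) applied with $K=\oline{k}$ forces $g=0$, and Theorem \ref{list}(2) forces $G$ to have no non-cyclic abelian subgroup. Among the finite groups in the genus-zero list right before Proposition \ref{list2}, this leaves exactly the cyclic groups (necessarily with $(r,{\bf e})=(2,(|G|,|G|))$) and the dihedral groups of order $2n$ with $n\ge 3$ odd (with $(r,{\bf e})=(3,(2,2,|G|/2))$), since $A_4$, $S_4$, $A_5$ and $D_n$ for even $n$ all contain a copy of $\Zz/2\Zz\times\Zz/2\Zz$. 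Theorem \ref{list}(3) applied contrapositively in the cyclic-even and dihedral subcases then forces ${\bf t}\subset\Pp^1(k)$ (nothing further is needed in the cyclic-odd subcase). Finally, Lemma \ref{prop:compare_2} produces a generic polynomial of group $G$ over $k$, so Theorem \ref{prop:classification} supplies the matching root of unity condition in each of (a), (b), (c).

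The main obstacle is the bookkeeping in $(1) \Rightarrow (3)$: one must track which base change gives which obstruction and separately handle the three subcases. The cyclic-odd subcase is slightly delicate since Theorem \ref{list}(3) is silent there, but the conclusion in (3)(b) is correspondingly weaker and imposes no constraint on ${\bf t}$. The remaining directions are essentially formal once Proposition \ref{list2} and Lemma \ref{prop:compare_1} are in hand.
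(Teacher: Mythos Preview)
Your proof is correct and follows essentially the same route as the paper: the same cycle $(2)\Rightarrow(1)\Rightarrow(3)\Rightarrow(2)$ via Lemma~\ref{prop:compare_1}, Theorem~\ref{list}, and Proposition~\ref{list2}. The one difference is that for the root-of-unity conditions in $(1)\Rightarrow(3)$ you invoke Lemma~\ref{prop:compare_2} together with Theorem~\ref{prop:classification}, whereas the paper extracts them directly from Lemma~\ref{lemma:rigid} (the Branch Cycle Lemma); the paper's choice keeps Corollary~\ref{list ext} logically independent of Theorem~\ref{prop:classification}, so that the corollary can in turn yield an alternative proof of that theorem (cf.\ Remark~\ref{rk:3.6}(1)).
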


We need the next lemma, which is classical in inverse Galois theory. The ``only if" part is an immediate consequence of the Branch Cycle Lemma (see \cite{Fri77} and \cite[Lemma 2.8]{Vol96}) and the ``if" part is due to the rigidity method (see, e.g., \cite[Chapter 3]{Vol96}).

\begin{lemma} \label{lemma:rigid}
Let $k$ be a field of characteristic zero.

\vspace{0.5mm}

\noindent
{\rm{(1)}} Given $n \geq 2$, there is a $k$-regular Galois extension of $k(T)$ of group $\Zz/n\Zz$ and with two branch points if and only if $e^{2i \pi/n} + e^{-2i \pi/n} \in k$; both branch points can be chosen in $\mathbb{P}^1(k)$ if and only if $e^{2i \pi/n} \in k$.

\vspace{0.5mm}

\noindent
{\rm{(2)}} Given $n \geq 3$ odd, there is a $k$-regular Galois extension of $k(T)$ with dihedral Galois group of order $2n$, with three branch points, and all branch points in $\mathbb{P}^1(k)$ if and only if $e^{2i \pi/n} + e^{-2i \pi/n} \in k$. 
\end{lemma}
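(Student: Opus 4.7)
My plan for this (classical) lemma is to handle the ``only if'' direction of both (1) and (2) uniformly via the Branch Cycle Lemma (BCL, cf.\ \cite{Fri77}, \cite[Lemma 2.8]{Vol96}), and to handle the ``if'' direction by an explicit Kummer construction with descent for the cyclic case, and by the rigidity method (cf.\ \cite[Chapter 3]{Vol96}) for the dihedral case.

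For (1) $\Rightarrow$: a $k$-regular $\Zz/n\Zz$-extension of $k(T)$ with exactly two branch points $t_1,t_2$ must have inertia canonical invariant of the form $(C,C^{-1})$, where $C$ is the class of a generator---the two inertia generators multiply to $1$ and must generate the whole group. The BCL asserts that for each $\sigma \in \Gal(\oline k/k)$, the permutation $\pi$ of $\{1,2\}$ induced by $\sigma$ on the branch-point set satisfies $C_{\pi(i)} = C_i^{\chi_n(\sigma)}$, where $\chi_n$ is the $n$-th cyclotomic character. If both $t_i \in \Pp^1(k)$, then $\pi$ is trivial, forcing $\chi_n(\sigma) \equiv 1 \pmod n$ and hence $\zeta_n := e^{2i\pi/n} \in k$. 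If only the set $\{t_1,t_2\}$ is $k$-stable, $\pi$ can be the transposition, forcing $\chi_n(\sigma) \in \{\pm 1\} \pmod n$, i.e.\ $\zeta_n + \zeta_n^{-1} \in k$. A parallel argument handles (2) $\Rightarrow$: a short generation count in $D_n$ pins down the ramification type to $(2,2,n)$ (two reflections and one order-$n$ rotation, with product $1$); since $n$ is odd, the conjugacy class of an order-$n$ rotation in $D_n$ is $\{r,r^{-1}\}$, so applying the BCL at the $k$-rational third branch point gives $\chi_n(\sigma) \equiv \pm 1 \pmod n$, i.e.\ $\zeta_n + \zeta_n^{-1} \in k$.

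For (1) $\Leftarrow$: if $\zeta_n \in k$, the polynomial $Y^n - T$ yields the required extension, branched at $\{0,\infty\} \subset \Pp^1(k)$. If only $\zeta_n + \zeta_n^{-1} \in k$ (so $[k(\zeta_n):k]=2$), pick $\alpha \in k(\zeta_n) \setminus k$ with $k$-conjugate $\oline\alpha$, form the Kummer extension of $k(\zeta_n)(T)$ cut out by $Y^n = (T-\alpha)/(T-\oline\alpha)$, and lift the nontrivial $\tau \in \Gal(k(\zeta_n)/k)$ to an automorphism by setting $\tau(Y) = Y^{-1}$ (consistent because $\tau(\zeta_n) = \zeta_n^{-1}$). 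Checking $\tau^2 = 1$ and descending to the $\tau$-fixed field produces the desired $k$-regular $\Zz/n\Zz$-extension, branched at the Galois orbit $\{\alpha,\oline\alpha\}$ of size $2$. For (2) $\Leftarrow$, I would invoke the rigidity theorem for the class triple $(C_1,C_1,C_3)$ in $D_n$, where $C_1$ is the unique class of reflections (using $n$ odd) and $C_3 = \{r,r^{-1}\}$ is the class of an order-$n$ rotation: a direct count of triples $(g_1,g_2,g_3)$ with $g_1g_2g_3=1$ and $g_i \in C_i$ (there are $2n$ such, all generating $D_n$, forming a single conjugation orbit since the centralizer of any of them in $D_n$ is cyclic) shows rigidity, and rationality of the triple follows from $\zeta_n + \zeta_n^{-1} \in k$ by the same BCL computation as in the converse. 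The rigidity theorem then produces a $k$-regular $D_n$-extension of $k(T)$ with three prescribed $k$-rational branch points carrying the required inertia classes.

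The step I expect to require the most care is the descent in the cyclic ``if'' direction: one must verify that the lifted $\tau$ commutes with the Kummer generator $\rho\colon Y\mapsto \zeta_n Y$ on the Kummer extension (so that $\Gal(F_0/k(T)) \cong \Zz/n\Zz \times \Zz/2\Zz$ rather than a dihedral twist), that $\tau^2=1$ as an automorphism of $F_0$, and that the $\tau$-fixed field is $k$-regular with Galois group $\Zz/n\Zz$ and the claimed branch points. Everything else reduces to routine applications of the BCL and of rigidity, both well documented in the cited references.
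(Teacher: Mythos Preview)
Your proposal is correct and aligns with the paper's approach: the paper does not give a detailed proof either, merely stating that the ``only if'' direction is an immediate consequence of the Branch Cycle Lemma (your argument) and that the ``if'' direction follows from the rigidity method (\cite[Chapter 3]{Vol96}). Your treatment of the dihedral ``if'' via rigidity of the triple $(C_1,C_1,C_3)$ in $D_n$ matches this exactly; for the cyclic ``if'' you instead give an explicit Kummer-plus-descent construction, which is arguably the more honest route since the standard rigidity theorem requires trivial center and hence does not apply directly to $\Zz/n\Zz$. Your descent check goes through: with $\rho(Y)=\zeta_n Y$ and $\tau(Y)=Y^{-1}$ one computes $\tau\rho(Y)=\zeta_n^{-1}Y^{-1}=\rho\tau(Y)$ and $\tau^2(Y)=Y$, so $\Gal(F_0/k(T))\cong \Zz/n\Zz\times\Zz/2\Zz$, the $\tau$-fixed field $F$ satisfies $F\cap\overline{k}=k_0^{\langle\tau\rangle}=k$, and $F\overline{k}=F_0\overline{k}$ has branch locus $\{\alpha,\overline{\alpha}\}$. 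One small wording issue: in your rigidity count for $D_n$, the reason the $2n$ generating triples form a single conjugation orbit is that the simultaneous centralizer of a generating triple is $Z(D_n)=\{1\}$ (for odd $n\ge 3$), so the orbit has size $|D_n|=2n$; your phrase ``the centralizer of any of them is cyclic'' is not quite the right invariant.
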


\begin{proof}[Proof of Corollary \ref{list ext}]
First, (2) $\Rightarrow$ (1) follows from Lemma \ref{prop:compare_1}(1). Now, assume (1) holds. Then, as already seen, $F/k(T)$ is $k$-regular. Moreover, by Theorem \ref{list}, one of the conditions stated before Proposition \ref{list2} holds. Then, by Lemma \ref{lemma:rigid}, (3) holds. Finally, if (3) holds, then $P(T,Y)$ is generic, by Lemma \ref{prop:compare_1}(2) and Proposition \ref{list2}.
\end{proof}

\begin{remark} \label{rk:3.6}
(1) Corollary \ref{list ext} and Lemma \ref{lemma:rigid} give another proof of  Theorem \ref{prop:classification}.

\vspace{0.5mm}

\noindent
(2) In the spirit of Definition \ref{def:main}, say that $F/k(T)$ is {\it{strongly generic}} if it is strongly $L$-parametric for every overfield $L \supseteq k$. Clearly, we have $F/k(T)$ strongly generic $\Rightarrow$ $F/k(T)$ generic. The converse holds by combining Proposition \ref{list2} and Corollary \ref{list ext}.

\vspace{0.5mm}

\noindent
(3) Definition \ref{def:intro} is the definition of generic polynomials of \cite{JLY02}. Variants could have been used. For example, a strong one, used by Kemper (see \cite{Kem01}), requires extensions in ${\sf{R}}_{\leq G}(L)$ to be parametrized. In \cite{DeM83}, DeMeyer even requires every extension in ${\sf{R}}_{\leq G}(L)$ to be realized by a {\it{separable}} specialized polynomial. Lemma \ref{prop:compare_1}(2), Proposition \ref{list2}, and Corollary \ref{list ext} show that, for one parameter polynomials over fields of characte\-ristic 0, the three definitions are equivalent. In particular, we retrieve \cite[Theorem 1]{Kem01} in this case (Kemper's result asserts, more generally, that the first two definitions are equivalent over infinite fields, for polynomials with an arbitrary number of parameters).
\end{remark}

\begin{proof}[Proof of Corollary \ref{cor:list}]
Let $G$ be finite non-trivial and $P(T,Y) \in \Qq[T][Y]$ be monic se\-pa\-rable of group $G$ and splitting field $F$ over $\Qq(T)$. By (2) $\Leftrightarrow$ (3) in Corollary \ref{list ext} (with $k=\Qq$), $P(T,Y)$ is generic if and only if $F \cap \overline{\Qq} = \Qq$ and one of these conditions holds:

\noindent
- $G=\Zz/2\Zz$ and $F/\Qq(T)$ has two branch points, which are $\Qq$-rational,

\noindent
- $G=\Zz/3\Zz$ and $F/\Qq(T)$ has two branch points,

\noindent
- $G=S_3$ and $F/\Qq(T)$ has three branch points, which are $\Qq$-rational.

In the first case, observe next that any $\Qq$-regular quadratic extension of $\Qq(T)$ with two branch points, which are $\Qq$-rational, equals $\Qq(\sqrt{d(T-a)(T-b)})/\Qq(T)$ or $\Qq(\sqrt{d(T-a)})/\Qq(T)$ for some $d \in \Zz$ and $a, b \in \Qq$. All of these are derived from $\Qq(\sqrt{T})/\Qq(T)$, by applying a suitable M\"obius transformation on $T$. Hence, if $G=\Zz/2\Zz$, the polynomial $P(T,Y)$ is generic if and only if $F=\Qq(\sqrt{T})$, up to some M\"obius transformation on $T$.

In the second case, let $F_1/\Qq(T)$ and $F_2/\Qq(T)$ be $\Qq$-regular Galois extensions of group $\Zz/3\Zz$ with two branch points. Fix $j \in \{1, 2\}$. By the Branch Cycle Lemma, the branch points of $F_j/\Qq(T)$ are $\Qq$-conjugate and generate $\Qq(e^{2 i \pi/3})$ over $\Qq$. Moreover, $(F_j)_{t_j}= \Qq$ for some $t_j \in \mathbb{P}^1(\Qq)$. Up to applying $T \mapsto 1/(T-t_j)$, we may assume $t_j=\infty$. Then, up to applying $T \mapsto (T-a)/b$ for some $a,b \in \Qq$ with $b \not=0$, which fixes $\infty$, we may also assume the branch point set of $F_j/\Qq(T)$ is $\{e^{2 i \pi/3}, e^{4 i \pi/3}\}$. Then $F_1 \overline{\Qq} = F_2\overline{\Qq}$. Indeed, we would have otherwise that $F_1F_2 \overline{\Qq}/ \overline{\Qq}(T)$ has Galois group $\Zz/3\Zz \times \Zz/3\Zz$, and so has at least three branch points, which cannot happen. Hence, $[F_1F_2 \overline{\Qq} : \overline{\Qq}(T)]=3$. If $F_1 \not=F_2$, then $[F_1F_2 : \Qq(T)]=9$ and $F_1F_2/\Qq(T)$ has a degree 3 constant subextension. But the latter cannot happen as $(F_1F_2)_{\infty} = (F_1)_{\infty} (F_2)_{\infty} = \Qq$ (see \cite[Lemma 2.4.8]{FJ08} for the first equality). We then have $F_1=F_2$. Hence, a $\Qq$-regular Galois extension of $\Qq(T)$ of group $\Zz/3\Zz$ with two branch points is unique, up to M\"obius transformations on $T$. Let $F'$ be the splitting field of $Y^3 - TY^2 + (T-3)Y + 1$ over $\Qq(T)$. As $F'/\Qq(T)$ is $\Qq$-regular of group $\Zz/3\Zz$, and has two branch points, we are done in the case $G=\Zz/3\Zz$.

Finally, consider the case $G=S_3$. The inertia canonical invariant of a $\Qq$-regular Ga\-lois extension of $\Qq(T)$ of group $S_3$ with 3 branch points is $(C_2, C_2, C_3)$, with $C_n$ the conjugacy class of the $n$-cycles. As $(C_2, C_2, C_3)$ is a rigid triple of rational con\-ju\-ga\-cy classes of the centerless group $S_3$, there is only one $\Qq$-regular Galois extension of $\Qq(T)$ of group $S_3$ with three $\Qq$-rational branch points, up to M\"obius transformation on $T$ (see \cite[Chapters 7 and 8]{Ser92}). Then we are done as, if $F'$ is the splitting field of $Y^3+TY+T$ over $\Qq(T)$, then $F'/\Qq(T)$ is $\Qq$-regular, of group $S_3$, and of branch point set $\{0, \infty, -27/4\}$.
\end{proof}

\section{On Schinzel's problem and its variants} \label{sec:wh}

We investigate the connections between $k$-parametricity and $k(U)$-parametricity, in relation with Schinzel's problem (Question \ref{pb:schinzel}). 

In \S\ref{sec:intro}, we mentioned a close variant of Question \ref{pb:schinzel}. It corresponds to the following diophantine working hypothesis, which is introduced in \cite[\S2.4.2]{Deb18}:

\vspace{2mm}

\noindent
{\rm{(WH)}} {\it{Let $k$ be a number field, and let $f_i:X_i \rightarrow \Pp^1_{k(U)}$, $i=1,\ldots,N$, be $k(U)$-regular covers. Assume that no curve $X_i$ has a $\Cc(U)$-rational point that is unramified \hbox{w.r.t.} the cover $f_i$, $i=1,\ldots,N$. Then, for infinitely many $u_0\in k$, the covers $f_1,\ldots,f_N$ have good reduction at $U=u_0$ and no reduced curve $X_i|_{u_0}$ has a $k$-rational point that is unramified \hbox{w.r.t.} the cover $f_i|_{u_0}:X_i|_{u_0} \rightarrow \Pp^1_{k}$, $i=1,\ldots,N$.}}

\vspace{2mm}

Proposition \ref{prop:role_WH} below summarizes some of the connections between our notions of parametricity and genericity. As already said, if $k$ is a number field, a close variant of the implication ``$k$-parametric $\Rightarrow$ $k(U)$-parametric" holds under (WH). It is given by the implication ``strongly $k$-parametric $\Rightarrow$ weakly $k(U)$-parametric" below.

\begin{definition} \label{def:weakly}
Let $G$ be a finite group, $k$ a subfield of $\Cc$, and $F/k(T)$ a $k$-regular extension in ${\sf{R}}_G(k(T))$. We say that $F/k(T)$ is {\it weakly $k(U)$-parametric} if every $k$-regular extension $E/k(U) \in {\sf{R}}_G(k(U))$ is a specialization of $F(U)/k(U)(T)$, {after base change} $\Cc/k$.
\end{definition}

\begin{proposition} \label{prop:role_WH}
For a field $k$ of characteristic zero and a finite $k$-regular Galois extension of $k(T)$, we have
$$\small{\xymatrix  {
& \text{$k(U)$-parametric} \ar@{=>}[d] \ar@{=>}[rdr]^[@]{\text{$k \subseteq \Cc$}} && \\
\text{generic} \ar@{=>}[ru] \ar@{=>}[rd] & \text{ $k$-parametric} & & \text{weakly $k(U)$-parametric.}\\
& \text{strongly $k$-parametric} \ar@{=>}[u] \ar@{=>}[rur]^[@]{\text{$[k : \Qq] < \infty$}}_[@]{\text{{\rm{(WH)}}}} & &
}}$$
\end{proposition}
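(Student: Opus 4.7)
Most of the arrows in the diagram follow directly from the definitions or from earlier results. The implication \emph{generic $\Rightarrow$ $k(U)$-parametric} is the case $L = k(U)$ of the definition of generic; \emph{strongly $k$-parametric $\Rightarrow$ $k$-parametric} uses only ${\sf R}_G(k) \subseteq {\sf R}_{\leq G}(k)$; \emph{generic $\Rightarrow$ strongly $k$-parametric} is Remark \ref{rk:3.6}(2); and \emph{$k(U)$-parametric $\Rightarrow$ weakly $k(U)$-parametric} (under $k \subseteq \Cc$) is obtained by taking a $k$-regular $E/k(U) \in {\sf R}_G(k(U))$, finding $t_0 \in k(U) \subseteq \Cc(U)$ with $E = (F(U))_{t_0}$, and using Lemma \ref{spec} to pass to $E\Cc(U) = (F\Cc(U))_{t_0}$.

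For \emph{$k(U)$-parametric $\Rightarrow$ $k$-parametric}, the plan is to specialise the auxiliary parameter $U$. Given $E/k \in {\sf R}_G(k)$, the constant extension $E(U)/k(U)$ lies in ${\sf R}_G(k(U))$, so by hypothesis $E(U) = (F(U))_{t_0}$ for some $t_0 \in k(U)$. For all but finitely many $u_0 \in k$ (avoiding the poles of $t_0$, points where $t_0(u_0)$ falls into the branch locus ${\bf t}$ of $F/k(T)$, and the ramification of $E(U)/k(U)$ above $U = u_0$), I would specialise at $U = u_0$ and invoke Lemma \ref{spec} to conclude $E = F_{t_0(u_0)}$.

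The substantive implication is \emph{strongly $k$-parametric $\Rightarrow$ weakly $k(U)$-parametric}, assuming $k$ is a number field and (WH) holds. Fix a $k$-regular $E/k(U) \in {\sf R}_G(k(U))$. The twisting lemma (see \cite{Deb99a}) produces a $k(U)$-regular cover $f_E : X_E \to \Pp^1_{k(U)}$, with function field $(F(U))_E/k(U)(T)$ and the same geometric fibre as $F(U)/k(U)(T)$, characterised by the property that for any overfield $M \supseteq k(U)$ the unramified $M$-rational points of $X_E$ correspond to $t_0 \in \Pp^1(M) \setminus {\bf t}$ with $EM = (F(U))_{t_0}M$. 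I would then verify the hypothesis of the contrapositive of (WH) applied with $N = 1$ to $f_E$: for all but finitely many $u_0 \in k$, the cover $f_E$ has good reduction at $u_0$, the reduced cover $f_E|_{u_0}$ is the twist of $F/k(T)$ by the specialisation $E_{u_0}/k \in {\sf R}_{\leq G}(k)$, and strong $k$-parametricity supplies $t_{u_0} \in \Pp^1(k) \setminus {\bf t}$ with $F_{t_{u_0}} = E_{u_0}$; this is exactly a $k$-rational unramified point on $X_E|_{u_0}$. The contrapositive of (WH) then delivers a $\Cc(U)$-rational unramified point on $X_E$, i.e., a $t_0 \in \Pp^1(\Cc(U)) \setminus {\bf t}$ with $E\Cc(U) = (F\Cc(U))_{t_0}$.

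The main obstacle will be the compatibility of specialisation with the twisted-cover construction at cofinitely many $u_0$: one must check that $f_E$ has good reduction at $u_0$, that $f_E|_{u_0}$ really is the twist of $F/k(T)$ by $E_{u_0}/k$, and that the point on $X_E|_{u_0}$ supplied by the chosen $t_{u_0}$ is honestly unramified on the reduced cover (for the last, taking $t_{u_0} \notin {\bf t}$, which is built into the definition of ${\sf SP}$, should suffice). Once these standard but technical compatibilities are in place, the application of (WH) is immediate and converts the abundance of $k$-rational points on the reductions $X_E|_{u_0}$ into a $\Cc(U)$-point on $X_E$, which is precisely weak $k(U)$-parametricity.
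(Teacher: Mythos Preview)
Your proposal is correct and matches the paper's proof: the easy implications are handled exactly as you do, and the two substantive ones ($k(U)$-parametric $\Rightarrow$ $k$-parametric, and strongly $k$-parametric $\Rightarrow$ weakly $k(U)$-parametric under (WH)) are deferred by the paper to \cite[Remark 2.3]{Deb18} and \cite[Proposition 2.17(b)]{Deb18} respectively, whose arguments are precisely the specialisation-of-$U$ and twist-plus-contrapositive-of-(WH) sketches you give. One minor point: in your argument for $k(U)$-parametric $\Rightarrow$ $k$-parametric, the tool you need is compatibility of iterated specialisation (first in $T$, then in $U$) rather than the base-change formula of Lemma~\ref{spec}, which only applies when $t_0 \in \mathbb{P}^1(k)$.
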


\begin{proof}
The implications ``generic $\Rightarrow$ $k(U)$-parametric" and ``strongly $k$-parametric $\Rightarrow$ $k$-parametric" are clear, while ``generic $\Rightarrow$ strongly $k$-parametric" follows from Remark \ref{rk:3.6}(2). Next, \cite[Remark 2.3]{Deb18} proves ``strongly $k(U)$-parametric $\Rightarrow$ strongly $k$-parametric". With the same arguments as there, we get ``$k(U)$-parametric $\Rightarrow$ $k$-parametric". Now, if $k \subseteq \Cc$, the implication ``$k(U)$-parametric $\Rightarrow$ weakly $k(U)$-parametric" follows from Lemma \ref{spec}. Finally, if $k$ is a number field and (WH) holds, then the implication ``strongly $k$-parametric $\Rightarrow$ weakly $k(U)$-parametric" is \cite[Proposition 2.17(b)]{Deb18}.
\end{proof}

The implication ``strongly $k$-parametric $\Rightarrow$ weakly $k(U)$-parametric", which holds under (WH) and if $k$ is a number field, was used in \cite{Deb18} to produce (conditionally) examples of finite groups with no strongly $k$-parametric extension $F/k(T)$, by first providing exam\-ples of finite groups with no weakly $k(U)$-parametric extension $F/k(T)$. The following theorem suggests, however, that this approach fails in general. 

To word it, we denote by $W(C/k) \in \{\pm 1\}$ the root number of an elliptic curve $C$ over a number field $k$. We refer to, e.g., \cite{Sil09, PRS11} for the definition and, more generally, for more on the terminology of elliptic curves that is used below.

\begin{theorem} \label{thm versus}
Let $k$ be a number field and $Q(T)\in k[T]$ an irreducible degree 3 polynomial such that the elliptic curve $C: Y^2=Q(T)$ fulfills $W(C/k) = -1$, but $W(C/L)=+1$ for every quadratic extension $L/k$. Set $F/\Qq(T) = \Qq(T)(\sqrt{Q(T)})/\Qq(T)$ and $P(U,T,Y)= Y^2 - UQ(T)$. Then, under the Birch and Swinnerton-Dyer conjecture, we have:

\vspace{0.5mm}

\noindent
{\rm{(1)}} $F/\Qq(T)$ is strongly $k$-parametric but neither weakly $k(U)$-parametric nor $k(U)$-parametric,

\vspace{0.5mm}

\noindent
{\rm{(2)}} $Y^2-Q(T)$ is strongly $k$-parametric but not $k(U)$-parametric,

\vspace{0.5mm}

\noindent
{\rm{(3)}} the answer to Question \ref{pb:schinzel} is negative for the field $k$ and the polynomial $P(U,T,Y)$,

\vspace{0.5mm}

\noindent
{\rm{(4)}} the above hypothesis {\rm{(WH)}} fails for the number field $k$ and the sole $k(U)$-regular Galois cover $X \rightarrow \mathbb{P}^1_{k(U)}$ given by the polynomial $P(U,T,Y)$.
\end{theorem}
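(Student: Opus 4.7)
The plan is to reduce all four assertions to two complementary inputs: an \emph{arithmetic} one from BSD applied to quadratic twists of $C$, and a \emph{geometric} one from a Mordell--Weil computation over $\Cc(U)$. For the arithmetic input, let $C_d\colon Y^2 = d\,Q(T)$ be the quadratic twist of $C$ by $d \in k^\times$. The classical formula $W(C/k(\sqrt d)) = W(C/k)\cdot W(C_d/k)$ combined with the hypothesis yields $W(C_d/k) = -1$ for every non-square $d$, the square case being trivial since then $C_d \cong C$. Under BSD, $\mathrm{rank}\, C_d(k)$ is odd and $C_d(k)$ is infinite; since $Q$ is irreducible of degree $3$, no root of $Q$ lies in $k$, so every non-identity $k$-point of $C_d$ has $Y \neq 0$.

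For the geometric input, view $C_U\colon Y^2 = U\,Q(T)$ as an elliptic curve over $\Cc(U)$. It becomes isomorphic over $\Cc(\sqrt U)$ to the constant elliptic curve $C_\Cc$ via $(T, Y) \mapsto (T, Y/\sqrt U)$. A constant elliptic curve of positive genus admits no non-constant sections over the function field of $\mathbb{P}^1$, so $C_\Cc(\Cc(\sqrt U)) = C(\Cc)$; the nontrivial Galois element $\sqrt U \mapsto -\sqrt U$ transports to negation on these constant sections, so $C_U(\Cc(U)) = C[2](\Cc)$, all of whose non-identity members satisfy $Y = 0$. In particular, no $\Cc(U)$-rational point of $C_U$ has $Y \neq 0$.

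The positive halves of (1) and (2) follow from the arithmetic input. Every $E \in {\sf R}_{\leq \Zz/2\Zz}(k)$ has the form $k(\sqrt d)$ with $d \in k$ (taking $d = 1$ for $E = k$), and a $k$-point $(t_0, y_0)$ of $C_d$ with $y_0 \neq 0$ realizes $E = k(\sqrt{Q(t_0)})$ as the specialization of $F/k(T)$ at $t_0$. This gives strong $k$-parametricity of $F/k(T)$; Lemma \ref{prop:compare_1} transports it to the polynomial $Y^2 - Q(T)$ in (2). The same reasoning applied to $d = u_0 \in \Zz \setminus \{0\}$ supplies $k^2$-zeros of $P(u_0, T, Y) = Y^2 - u_0 Q(T)$ (the case $u_0 = 0$ is immediate), providing the Schinzel hypothesis in (3).

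All non-parametricity statements come from the geometric input. For (1), the $k$-regular extension $k(U)(\sqrt U)/k(U)$ cannot be a $\Cc(U)$-specialization of $F\cdot\Cc(U)(T)/\Cc(U)(T)$: such a specialization would yield $t_0 \in \Cc(U)$ with $Q(t_0) \in U \cdot (\Cc(U)^\times)^2$, i.e.\ a $\Cc(U)$-point of $C_U$ with $Y \neq 0$, which does not exist. Thus $F/k(T)$ is not weakly $k(U)$-parametric, and Proposition \ref{prop:role_WH} rules out $k(U)$-parametricity as well; Lemma \ref{prop:compare_1} transports this to (2). A $k(U)^2$-zero of $P(U, T, Y)$ is a fortiori a $\Cc(U)$-point of $C_U$ with $Y \neq 0$, completing (3). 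For (4), the cover $X \to \mathbb{P}^1_{k(U)}$ defined by $Y^2 = U\,Q(T)$ ramifies exactly where $Y = 0$ (together with $T = \infty$): the geometric input supplies the hypothesis of {\rm (WH)}, while the arithmetic input produces infinitely many unramified $k$-rational points on the (good) reduction $X|_{u_0}\colon Y^2 = u_0 Q(T)$ for every $u_0 \in k^\times$, contradicting the conclusion of {\rm (WH)}. The main obstacle is the geometric input --- specifically, verifying the Mordell--Weil identification $C_U(\Cc(U)) = C[2](\Cc)$ for this quadratic twist of a constant elliptic curve --- after which every claim reduces to a direct translation.
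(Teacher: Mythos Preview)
Your proof is correct and follows the same overall architecture as the paper's: an arithmetic input (BSD plus the root-number hypothesis forces all quadratic twists $C_d$ to have infinitely many $k$-points) and a geometric input (no $\Cc(U)$-point of $C_U:Y^2=UQ(T)$ has $Y\neq 0$), which are then combined exactly as you do. The arithmetic halves are essentially identical.

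The genuine difference is in how the geometric input is obtained. The paper does not compute $C_U(\Cc(U))$; instead it invokes a branch-point-count result \cite[Theorem 2.1]{Deb18}: since $F/\Qq(T)$ has four branch points while $\Qq(\sqrt T)/\Qq(T)$ has only two, $L(\sqrt U)$ cannot arise as a specialization $(FL(U))_{t_0}$ at a non-constant $t_0$, and constant $t_0$ are ruled out via Lemma~\ref{spec}. Your route is the explicit Mordell--Weil calculation: $C_U$ becomes the constant curve $C$ over $\Cc(\sqrt U)$, the constant curve has no non-constant sections over a rational function field, and Galois descent identifies $C_U(\Cc(U))$ with $C[2](\Cc)$. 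Your argument is more elementary and fully self-contained (no appeal to \cite{Deb18}), but it is specific to this quadratic-twist-of-a-constant-curve situation, whereas the paper's branch-point argument slots into a general framework on pullbacks of covers that is used elsewhere in \cite{Deb18}. Both reach the same conclusion; yours is arguably the cleaner proof for this particular theorem.
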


\begin{proof}[Proof of Theorem \ref{thm versus}]
Under the Birch and Swinnerton-Dyer conjecture, and by our assumption on $W(C/k)$, the elliptic curve $C$ has odd rank over $k$, and so infinitely many $k$-rational points. Similarly, for every non-square $u_0 \in k$, the twisted elliptic curve $C_{u_0}:Y^2=u_0Q(T)$ has positive rank, and so infinitely many $k$-rational points. See \cite{DD09} for more details. Hence, the next two statements (which are equi\-valent) hold:

\vskip 0,5mm

\noindent
(a) {\it{for each $u_0 \in k^*$, the polynomial $P(u_0, T, Y)$ has a zero $(t,y)$ in $k^2$ such that $y \not=0$,}}

\vspace{0.5mm}

\noindent
(b) {\it{every trivial or quadratic extension of $k$ is the splitting field over $k$ of some separable polynomial $Y^2-Q(t_0)$ with $t_0 \in k$}}.

\vskip 0,5mm

Now, we have:

\vskip 0,5mm

\noindent
(c) {\it{given a field $L$ of characteristic zero, $L(\sqrt{U})/L(U) \not \in {\sf{SP}}(FL(U)/L(U)(T))$.}}

\vspace{0.5mm}

\noindent
Indeed, let $L$ be a field of characteristic 0. Since $F/\Qq(T)$ has 4 branch points while $\Qq(\sqrt{T})/\Qq(T)$ has only 2, \cite[Theorem 2.1]{Deb18} yields $L(\sqrt{U}) \not= (FL(U))_{t_0}$ for every $t_0 \in L(U) \setminus L$. Then use Lemma \ref{spec} as in \S\ref{ssec:proof1_2} to rule out the constant specializations at points $t_0\in \Pp^1(L)$.

Next, (c) is equivalent to the following:

\vskip 0,5mm

\noindent
(d) {\it{for $L$ of characteristic zero, $P(U,T,Y)$ has no zero $(t,y) \in L(U)^2$ such that $y \not=0$.}}

\vskip 0.5mm

\noindent
In particular, we have:

\vskip 0,5mm

\noindent
(e) {\it{the polynomial $P(U,T,Y)$ has no zero in $k(U)^2$.}}

\vskip 0,5mm

\noindent
Indeed, suppose $P(U,T,Y)$ has such a zero $(t,y)$. As $Q(T)$ is assumed irreducible over $k$, we have that $t$ is not a root of $Q(T)$. Hence, $y\not= 0$, which cannot happen by (d).

Finally, by (b) and (c), $F/\Qq(T)$ is strongly $k$-parametric but not weakly $k(U)$-parametric. The (weaker) conclusion that it is not $k(U)$-parametric then follows from Proposition \ref{prop:role_WH}. Now, (2) follows from (b), (1), and Lemma \ref{prop:compare_1}(1). Next, (3) follows from (a) and (e).  As to (4), it basically follows from (1) and Proposition \ref{prop:role_WH} (in fact, from (a) and (d)).
\end{proof}

We now explain how Theorem \ref{thm:main4} follows from Theorem \ref{thm versus}:

\begin{proof}[Proof of Theorem \ref{thm:main4}]
Let $Q(T) \in \Qq[T]$ be a degree 3 separable polynomial such that the elliptic curve $C/\Qq$ given by $Y^2=Q(T)$ has complex multiplication by $k_0=\Qq(\sqrt{-m})$ for some $m \in \{11,19, 43, 67, 163\}$. As $k_0 \not= \Qq(\sqrt{-1}), \Qq(\sqrt{-2}), \Qq(\sqrt{-3})$, we may apply \cite[Corollary 4.10]{LY20} to get that there exist infinitely many quadratic number fields $k$ such that $W(C/k)=-1$ and $W(C/L) = +1$ for every quadratic extension $L$ of $k$. Moreover, since $C/\Qq$ has complex multiplication by $k_0=\mathbb{Q}(\sqrt{-m})$ for some $m\in \{11,19,43,67,163\}$, the elliptic curve $C/\Qq$ has trivial $\mathbb{Q}$-torsion (see the table in \cite{Ols74}). In particular, the triviality of the rational $2$-torsion subgroup is equivalent to the irreducibility of $Q(T)$ over $\Qq$, and so over every quadratic number field. Consequently, there exist infinitely many quadratic number fields $k$ such that 
the elliptic curve $C/k$ fulfills the assumptions of Theorem \ref{thm versus}, thus yielding the assertion.
\end{proof}

\begin{remark}\label{rem:versus}
(1) First explicit examples of elliptic curves $C$ and number fields $k$ as in Theorem \ref{thm versus} were given in \cite{DD09}, where they are called ``lawful evil" elliptic curves. See \cite[Theorem 4.9]{LY20} for an even more general construction of such curves $C$ and fields $k$. The explicit example given right after the statement of Theorem \ref{thm:main4} is taken from \cite[Example 4.12(ii)]{LY20}.
	
\vspace{0.75mm}
	
\noindent
(2) In the context of Theorem \ref{thm versus}, the polynomial $Q(T)$ is separable of degree 3. Hence, the $\Cc(U)$-curve $P(U,T,Y)=0$ is of genus $1$. It remains plausible that (WH) holds if $f_1, \dots, f_N$ are all of genus $\geq 2$, which would yield that any given finite $k$-regular Galois extension $F/k(T)$ of genus $\geq 2$ which is not weakly $k(U)$-parametric is actually not strongly $k$-parametric. Similarly, it is plausible that the answer to Question \ref{pb:schinzel} is affirmative for $\Cc(U)$-curves $P(U,T,Y)=0$ of genus at least 2.
\end{remark}

As recalled in Proposition \ref{prop:role_WH}, if a $k$-regular Galois extension of $k(T)$ is $k(U)$-parametric, then it is $k$-parametric. Theorem \ref{thm versus}(1) shows that the converse fails (conditionally) over number fields. Here is another counter-example, unconditional, over Laurent series fields:

\begin{proposition} \label{prop:versus}
Let $k$ be algebraically closed of characteristic zero and $G$ a finite group. 

\vspace{0.5mm}

\noindent
{\rm{(1)}} There exists $F/k(T) \in {\sf{R}}_G(k(T))$ fulfilling the following. Let $K \supseteq k$ be algebraically closed and $L=K((V))$. Then $F/k(T)$ is strongly $L$-parametric. More precisely, given $E/L \in {\sf{R}}_{\leq G}(L)$, we have $E=(FL)_{t_0}$ for infinitely many $t_0 \in \mathbb{P}^1(L)$.

\vspace{0.5mm}

\noindent
{\rm{(2)}} If $G$ is neither cyclic nor dihedral of order $2n$ with $n \geq 3$ odd, then, for $L=K((V))$ where $K$ is any algebraically closed field  containing $k$, the extension $F/k(T)$ is not $L(U)$-parametric.
\end{proposition}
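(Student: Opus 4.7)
The plan is to use Riemann's existence theorem to construct $F$ with a branching pattern realizing every element order of $G$, then to verify strong $L$-parametricity by an explicit local computation near the branch points. Part (2) will follow immediately from Corollary \ref{thm:intro_2bis}(3).

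For part (1), as $k$ is algebraically closed of characteristic 0, Riemann's existence theorem realizes any generating tuple $(g_1,\ldots,g_r)$ of $G$ with $\prod_i g_i = 1$ as the inertia canonical invariant of some $F/k(T) \in {\sf R}_G(k(T))$, with branch points placed arbitrarily in $\mathbb{P}^1(k)$. I will choose such a tuple so that every order of a non-identity element of $G$ occurs among the $\mathrm{ord}(g_i)$. The resulting $F$ is automatically $k$-regular. Since $K$ is algebraically closed of characteristic 0, every Galois extension of $L = K((V))$ is cyclic totally ramified, with a unique one $L_n = L(\sqrt[n]{V})$ of each degree $n \geq 1$. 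Hence ${\sf R}_{\leq G}(L) = \{L\} \cup \{L_n : n \text{ is the order of some element of } G\}$.

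To realize each such $E$ as $(FL)_{t_0}$ for infinitely many $t_0$: the case $E = L$ follows from Lemma \ref{spec}, taking any $t_0 \in k \setminus \{t_1,\ldots,t_r\}$, for which $F_{t_0} = k$ and thus $(FL)_{t_0} = L$. For $E = L_n$ with $n \geq 2$, fix a branch point $t \in k$ of ramification index $n$ and take $t_0 = t + uV \in L$ with $u \in K^*$. Writing $A$ for the integral closure of $k[T]$ in $F$, the local structure at $t$ gives $A \otimes_{k[T]} k[[T-t]] \cong \prod_{j=1}^{|G|/n} k[[T-t]][\sqrt[n]{T-t}]$. Base changing along $k[[T-t]] \to K[[V]]$, $T-t \mapsto uV$, each factor becomes $K[[V]][W]/(W^n - uV)$; since $K$ is algebraically closed, extracting an $n$-th root of $u$ identifies this with $K[[V^{1/n}]]$. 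Inverting $V$ produces $L_n^{|G|/n}$, whence $(FL)_{t_0} = L_n$. As $u$ ranges over the infinite set $K^*$, we obtain infinitely many $t_0$ with the same specialization.

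For part (2), the hypothesis on $G$ allows us to apply Corollary \ref{thm:intro_2bis}(3), which states that no monic separable $P(T,Y) \in k[T][Y]$ of group $G$ is $K((V))(U)$-parametric. Choosing such a $P$ with splitting field $F$ over $k(T)$ and invoking Lemma \ref{prop:compare_1} (together with the identification, recalled in \S\ref{ssec:basic_1}, of $(FL(U))_{t_0}$ with the splitting field of $P(t_0,Y)$ for $t_0$ outside the branch locus) yields that $F/k(T)$ is not $L(U)$-parametric. I expect the main technical obstacle to lie in the local computation of part (1): keeping track of the valuations of $T - t_0$ and $uV$ as elements of $L[[T - t_0]]$, and two applications of Hensel's lemma (for $K$-units and for the perturbation $T - t_0$) are needed to collapse the specialization to the Kummer extension $L_n/L$.
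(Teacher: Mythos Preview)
Your argument for part (1) is correct and in fact more self-contained than the paper's. The paper packages the specialization step into Lemma \ref{sit}, whose ``if'' direction quotes \cite[Theorem 3.1]{Leg16} to produce the required $t_0$'s; you instead compute $(FL)_{t_0}$ directly for $t_0 = t + uV$ via the completion $A \otimes_{k[T]} k[[T-t]] \cong \prod k[[(T-t)^{1/n}]]$ and the base change $T - t \mapsto uV$. Both work; the paper's route yields the extra equivalence (the ``only if'' in Lemma \ref{sit}), which is not needed here, while yours avoids the external reference at the cost of the local bookkeeping you flagged. One small correction: your remark about ``two applications of Hensel's lemma'' is more than what is actually needed --- once you know the completion at a branch point of index $n$ is $k[[(T-t)^{1/n}]]$, the base change is a one-line Kummer identification using that $K$ contains $n$-th roots of $u$.

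For part (2) there is a minor logical wrinkle. Corollary \ref{thm:intro_2bis}(3) is a statement about \emph{polynomials}, and Lemma \ref{prop:compare_1}(1) only gives the implication ``$P$ is $L(U)$-parametric $\Rightarrow$ $F/k(T)$ is $L(U)$-parametric''. You need the converse, and you appeal to the identification in \S\ref{ssec:basic_1}; but that identification is stated for $t_0$ with $P(t_0,Y)$ separable, not for all $t_0 \notin {\bf t}$ (and not for $t_0 = \infty$), so a single exceptional $t_0$ could in principle obstruct the deduction. The clean fix is simply to bypass the polynomial detour: the \emph{proof} of Corollary \ref{thm:intro_2bis}(3) already establishes the extension-level statement (it shows ``$F/k(T)$ is $K((V))(U)$-parametric $\Rightarrow$ $G$ is cyclic or dihedral'' via Theorem \ref{list}(1)--(2) and the paragraph following it). That is exactly what the paper does here --- cite Theorem \ref{list} and the subsequent paragraph directly.
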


\begin{lemma} \label{sit}
Let $k$ be of characteristic zero, $K \supseteq k$ an algebraically closed overfield, $G$ a finite group, and $L=K((V))$. Then a given $k$-regular extension $F/k(T) \in {\sf{R}}_G(k(T))$ with inertia canonical invariant $(C_1,\dots,C_r)$ is strongly $L$-parametric if and only if 

\vspace{0.5mm}

\noindent
{\rm{($*$)}} for each element order $n$ in $G$, there is $i \in \{1,\dots,r\}$ such that elements of $C_i$ have order divisible by $n$.

\vspace{0.5mm}

\noindent
Moreover, if {\rm{($*$)}} holds, then, given $E/L \in {\sf{R}}_{\leq G}(L)$, there exist infinitely many $t_0 \in \mathbb{P}^1(L)$ such that $E=(FL)_{t_0}$.
\end{lemma}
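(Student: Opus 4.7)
The plan is to use that $L = K((V))$ has absolute Galois group $\widehat{\Zz}$ (since $K$ is algebraically closed of characteristic $0$), so every extension in ${\sf{R}}_{\leq G}(L)$ is cyclic, and every non-trivial such extension is totally tamely ramified and equals $L(V^{1/d})/L$ for its degree $d$, by Kummer theory. Consequently, ${\sf{R}}_{\leq G}(L)$ is in bijection with the set of element orders in $G$, each of the corresponding extensions being ramified at the unique non-trivial place of $L$. Moreover, $\overline{k}\subseteq K\subseteq L$ implies $\Gal(FL/L(T))=G$ and forces $t_1,\ldots,t_r\in L$; and since $L$ has no non-trivial unramified extension, the decomposition group of $FL/L(T)$ at any place above $t_i$ coincides with the inertia $\langle g_i\rangle$ of order $e_i$.

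For ``strongly $L$-parametric $\Rightarrow$ $(*)$'', I would fix an element order $n$ of $G$ and use strong $L$-parametricity to write $L(V^{1/n})=(FL)_{t_0}$ for some $t_0\in\Pp^1(L)$. This specialization is ramified at $\langle V\rangle$, so the Specialization Inertia Theorem \cite[\S 2.2]{Leg16} forces $t_0$ to meet a branch point $t_i$ modulo $\langle V\rangle$ and identifies the inertia of $(FL)_{t_0}/L$ with a subgroup of $\langle g_i\rangle$. Comparing orders ($n$ and $e_i$) yields $n\mid e_i$, as desired.

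For the converse, together with the infiniteness statement, I would fix $E=L(V^{1/n})\in{\sf{R}}_{\leq G}(L)$ and pick $i$ with $n\mid e_i$; one may assume $t_i\neq\infty$. Working in the $(T-t_i)$-adic completion, Kummer theory (valid because $L$ contains all roots of unity) writes the local piece of $FL$ above $t_i$ as $L((s))(\pi)$ with $s=T-t_i$ and $\pi^{e_i}=s\mu$ for some $\mu\in L[[s]]^\times$. Specializing at $t_0=t_i+\alpha$ with $\alpha\in L^\times$ of sufficiently positive $V$-valuation yields $(FL)_{t_0}=L\bigl((\alpha\mu(\alpha))^{1/e_i}\bigr)$. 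Since every unit of $K[[V]]$ is an $e_i$-th power (Hensel's lemma plus the algebraic closedness of $K$), the class of $\alpha\mu(\alpha)$ in $L^\times/(L^\times)^{e_i}\cong\Zz/e_i\Zz$ depends only on $v_V(\alpha)+v_V(\mu(t_i))\pmod{e_i}$. Choosing $v_V(\alpha)$ so that this sum is $\equiv e_i/n\pmod{e_i}$ forces $(FL)_{t_0}=L(V^{1/n})=E$, and letting $\alpha$ range over the infinite set of elements of $L^\times$ with the prescribed $V$-valuation yields infinitely many such $t_0$.

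The main technical hurdle I anticipate is the explicit passage from the local Kummer normal form to the Kummer class of the specialization; this rests on the identification $D_i=I_i$ over $L$ (crucially using that $K$ is algebraically closed, hence that $L$ has no unramified extensions) together with the description of $L^\times/(L^\times)^{e_i}$ as $\Zz/e_i\Zz$ generated by $V$. Everything else is bookkeeping.
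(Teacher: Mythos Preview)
Your argument follows the same strategy as the paper's. Both identify ${\sf{R}}_{\leq G}(L)$ with the set of extensions $L(V^{1/n})/L$ for $n$ an element order in $G$, and both invoke the Specialization Inertia Theorem for the implication ``strongly $L$-parametric $\Rightarrow(*)$''. For the converse and the infinitude claim, the paper simply cites \cite[Theorem~3.1]{Leg16} to produce infinitely many $t_0\in L$ whose specialization has $V$-inertia generated by an element of $C_i^{e_i/n}$, whereas you re-derive this special case by an explicit local Kummer computation.

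Two small points to tighten. First, the equality $D_i=I_i$ at the branch points does not follow from ``$L$ has no non-trivial unramified extension'': that statement concerns the $V$-adic picture, while $D_i/I_i$ is the Galois group of the residue field $(FL)_{t_i}/L$ in the $(T-t_i)$-adic picture, and a~priori that residue extension could be $V$-ramified. The correct reason is simply that $(FL)_{t_i}=(FK)_{t_i}\cdot L=L$ by Lemma~\ref{spec}, since $K$ is algebraically closed. Second, for the passage from the local normal form $\pi^{e_i}=s\mu$ to the identification $(FL)_{t_0}=L\bigl((\alpha\mu(\alpha))^{1/e_i}\bigr)$, you should note that because the cover is defined over $K$ one may take $\pi\in B_K$ and $\mu\in K[[s]]^\times$, so that $\mu(\alpha)$ makes sense and $v_V(\mu(\alpha))=0$; combined with the fact that every finite extension of $L$ is totally $V$-ramified (hence $\Gal((FL)_{t_0}/L)$ coincides with the $V$-inertia you compute), this justifies the claimed equality rather than merely the inclusion $\supseteq$.
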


\begin{proof}
The set ${\sf{R}}_{\leq G}(L)$ precisely consists of all the extensions of the form $L(\sqrt[n]{V})/L$, where $n$ is any element order in $G$. As such an extension $L(\sqrt[n]{V})/L$ is totally ramified of index $n$ at the unique maximal ideal $\frak{P}$ of $K[[V]]$, a given $k$-regular extension ${F}/k(T) \in {\sf{R}}_G(k(T))$ is strongly $L$-parametric if and only if ${F}L/L(T)$ has a specialization at some $t_0 \in \mathbb{P}^1(L)$, which is not a branch point of $F/k(T)$, of ramification index $n$ at $\frak{P}$, for each element order $n$ in $G$.

Firstly, assume ($*$) holds. Let $n$ be an element order in $G$. Pick $i \in \{1,\dots,r\}$ such that the order $e$ of every element of $C_i$ is a multiple of $n$, and set $e=nm$. By \cite[Theorem 3.1]{Leg16}, there are infinitely many $t_0 \in L$ such that the inertia group at $\frak{P}$ of $({F}L)_{t_0}/L$ is generated by an element of $C_i^m$. In particular, the ramification index at $\frak{P}$ of such a specialization is $n$. Hence, ${F}/{k}(T)$ is strongly $L$-parametric. Conversely, assume ${F}/k(T)$ is strongly $L$-parametric. Let $n$ be an element order in $G$. By the above characterization, ${F}L/L(T)$ has a specialization of ramification index $n$ at $\frak{P}$. Then, by the Specialization Inertia Theorem, the inertia canonical invariant of $F/k(T)$ contains the conjugacy class of an element of $G$ of order divisible by $n$. Hence, ($*$) holds.
\end{proof}

\begin{proof}[Proof of Proposition \ref{prop:versus}]
By Riemann's existence theorem, there is $F/k(T) \in {\sf{R}}_G(k(T))$ whose inertia canonical invariant contains the conjugacy class of every element of $G \setminus \{1\}$. In particular, $F/k(T)$ fulfills Condition ($*$) of Lemma \ref{sit}. Hence, $F/k(T)$ is strongly $L$-parametric, for $K \supseteq k$ algebraically closed and $L=K((V))$, and, given $E/L \in {\sf{R}}_{\leq G}(L)$, there are infinitely many $t_0 \in \mathbb{P}^1(L)$ with $E=(FL)_{t_0}$. Finally, if $G$ is neither cyclic nor dihedral of order $2n$ with $n \geq 3$ odd, $F/k(T)$ is not $L(U)$-parametric by Theorem \ref{list} and the subsequent paragraph.
\end{proof}

\begin{remark} \label{rk:PAC}
Using Lemma \ref{prop:compare_1} yields this polynomial analog of Proposition \ref{prop:versus}:

\vspace{1mm}

\noindent
{\it{Let $k$ be algebraically closed of characteristic 0 and $G$ a finite group. There is a monic separable polynomial $P(T,Y) \in k[T][Y]$ of group $G$ such that, for $K \supseteq k$ algebraically closed and $L=K((V))$, the polynomial $P(T,Y)$ is strongly $L$-parametric. Furthermore, if $G$ is neither cyclic nor dihedral of order $2n$ with $n \geq 3$ odd, then $P(T,Y)$ is not $L(U)$-parametric.}}
\end{remark}

\section{Polynomials with more variables}

We conclude with several remarks on polynomials with more than one variable. The first one compares a single parametric polynomial with a finite ``parametric set". 

\begin{remark}\label{rem:finite}
As already used in the proof of Lemma \ref{prop:compare_2}, it is well-known that, over infinite fields, using more than one polynomial is redundant in the setup of generic polynomials. Namely, suppose $P_1(T_1, \dots, T_n,Y), \ldots,$ $P_r (T_1, \dots, T_n,Y) \in k[T_1,\dots,T_n][Y]$ are finitely many monic separable polynomials of group $G$ over $k(T_1, \dots, T_n)$ fulfilling this: for every overfield $L \supseteq k$ and every $E/L \in {\sf{R}}_G(L)$, there are $i \in \{1, \dots, r\}$ and $(t_1,\ldots,t_n)\in L^n$ such that $E$ is the splitting field over $L$ of $P_i(t_1,\ldots,t_n,Y)$. By \cite[Corollary 1.1.6]{JLY02}, it follows that at least one of the polynomials $P_i(T_1, \dots, T_n, Y)$ has to be generic itself.

On the other hand, the analogous property for parametric polynomials fails in general, e.g., for $G=\mathbb Z/8\mathbb Z$ and $k=\mathbb Q(\sqrt{17})$. Namely, there exist $n \geq 1$ and finitely many monic separable polynomials $P_1(T_1, \dots, T_n, Y), \ldots, P_r(T_1, \dots, T_n, Y) \in k[T_1,\dots,T_n][Y]$ of group $G$ over $k(T_1, \dots, T_n)$ fulfilling this: for every $E/k \in {\sf{R}}_G(k)$, there are $i \in \{1, \dots, r\}$ and $(t_1,\ldots,t_n) \in k^n$ such that $E$ is the splitting field over $k$ of $P_i(t_1,\ldots,t_n,Y)$ (see \cite[Theorems 3.3 and 4.2]{MS93})\footnote{By the proof of \cite[Theorem 4.2]{MS93}, one can actually take $n=5$.}. Such a set is called a {\it finite $k$-parametric set of polynomials for $G$}. However, there exists no $k$-parametric polynomial $P(T_1, \dots, T_n, Y) \in k[T_1,\ldots,T_n][Y]$ of group $G$ over $k(T_1, \dots, T_n)$, for any number of variables $n$ (see \cite[Remark A.2]{KN20}). 
\end{remark}

Our second remark suggests a notion of ``parametric dimension" measuring the complexity of all the Galois extensions of a given field with any given finite Galois group.

\begin{remark}
Recall that the {\it{generic dimension}} of a finite group $G$ over a field $k$, de\-no\-ted by ${\rm{gd}}_k G$, is either the smallest $n \geq 1$ such that there is a generic polynomial $P(T_1, \dots, T_n, Y) \in k[T_1, \dots, T_n][Y]$ of group $G$, or $\infty$ if there is no generic polynomial of group $G$ with coefficients in $k$ (see \cite[\S8.5]{JLY02}). In view of Remark \ref{rem:finite}, for the analogous notion of parametric dimension, we allow finite $k$-parametric sets of polyno\-mials. Define the {\it{(generalized) parametric dimension}} of $G$ over $k$, denoted by ${\rm{pd}}_k G$, to be either the smallest $n \geq 1$ for which there exists a finite $k$-parametric set of polynomials in $k[T_1, \dots, T_n][Y]$ for $G$, or $\infty$ if there is no such set\footnote{Note that the splitting fields of the given polynomials have Galois group $G$ over the rational function field $k(T_1,\ldots,T_n)$, as opposed to the definition of ``essential parametric dimension" in \cite{KN20}, which allows extensions of arbitrary function fields.}. 

Clearly, ${\rm{pd}}_k G \leq {\rm{gd}}_k G$, and it may happen that equality does not hold. For example, if $k$ is PAC (the definition is recalled in \S\ref{sec:intro}), we always have ${\rm{pd}}_k G=1$ (while ${\rm{gd}}_k G \geq 2$ for many groups $G$; see \cite[Proposition 8.2.4]{JLY02} and \cite{CHKZ08}), by \cite{Deb99a} and the fact that the answer to the regular inverse Galois problem over PAC fields is positive (see \cite[Main Theorem A]{Pop96}). A family of non-PAC examples is given by Remark \ref{rk:PAC}: if $k$ is algebraically closed of characteristic 0, then we always have ${\rm{pd}}_{k((V))} G=1$.
\end{remark}

Finally, let us recall the following definition (see \cite{BR97, JLY02}):

\begin{definition}
Let $k$ be a field.

\vspace{0.5mm}

\noindent
{\rm{(1)}} Let $M/L$ be a finite separable field extension with $k \subseteq L$. If, for an intermediate field $k \subseteq L' \subseteq L$, there is a field extension $M'$ of $L'$ contained in $M$, and with $[M':L']=[M:L]$ and $M=M'L$, we say that $M/L$ is {\it{defined}} over $L'$. Moreover, the {\it{essential dimension}} of $M/L$ over $k$ is the minimum of the transcendence degree of $L'/k$, when $L'$ runs through all intermediate fields over which $M/L$ is defined.

\vspace{0.5mm}

\noindent
{\rm{(2)}} Let $G$ be a finite group, acting regularly on a set ${\bf{T}} = \{T_1, \dots, T_{|G|}\}$ of indeterminates. Then the {\it{essential dimension}} ${\rm{ed}}_k G$ of $G$ over $k$ is the essential dimension of $k({\bf{T}})/k({\bf{T}})^G$ over $k$.
\end{definition}

Note that ${\rm{ed}}_k G \leq {\rm{gd}}_k G$ and, when ${\rm{gd}}_k G$ is finite, it is conjectured that ${\rm{ed}}_k G = {\rm{gd}}_k G$ (see \cite[\S 8.5]{JLY02}). However, the following known example (see \cite[Theorem 3.4]{One12}) shows that ${\rm{pd}}_k G$ may be strictly smaller even than ${\rm{ed}}_k G$ (with $k$ a number field).

\begin{example} 
Let $k=\mathbb Q(\sqrt{-1})$, $G=(\mathbb Z/2\mathbb Z)^5$, and consider the three polynomials $P_i(T_1,T_2, T_3,T_4,Y) = (Y^2-T_1-\ldots-T_i)\prod_{i=1}^4(Y^2-T_i)$, $i=2,3,4$, over $k(T_1, T_2, T_3,T_4)$. We claim that $\{P_2, P_3,P_4\}$ is a finite $k$-parametric set for $G$, and hence ${\rm{pd}}_k G \leq 4 <5={\rm{ed}}_k G$ (see, e.g., \cite[Theorem 8.2.11]{JLY02} for the last equality).

To show the claim, note that any Galois extension $E$ of $k$ of group $G$ is of the form $E=k(\sqrt{t_1},\ldots,\sqrt{t_5})$ for some $t_i\in k^\times$, $i=1,\ldots,5$. Then, by the Hasse--Minkowski theorem (see \cite[Chapter VI, Corollary 3.5]{Lam05}), and as $k$ is totally imaginary, there is $(a_1, \dots, a_4) \in k^4$ such that $t_5=\sum_{j=1}^4 t_j a_j^2$ (up to reordering the $t_i$'s). Rearrange the values of $t_j$, so that $a_j\neq 0$ for $j\leq i$, and $a_j=0$ for $j>i$, for some $2\leq i\leq 4$ (note that $i=1$ is impossible as $[k(\sqrt{t_1}, \sqrt{t_5}) : k]=4$). The splitting field of $P_i(s_1,\ldots,s_4,Y)$ over $k$, where $s_j = t_j a_j^2$ for $j \leq i$ and $s_j = t_j$ for $j > i$, is then $E$, as desired. 
\end{example}

Ongoing research will investigate further the connection between generic, essential, and parametric dimensions.

\vspace{-1.5mm}

\bibliography{Biblio2}
\bibliographystyle{alpha}

\end{document}